\newtheorem{thm}{Theorem}[section]
\newtheorem{lem}[thm]{Lemma}
\theoremstyle{definition}
\long\def\delete#1{}
\begin{document}
\openup 0.5\jot

\title{The maximum forcing numbers of quadriculated tori}
\author[1]{Qianqian Liu\thanks{ E-mail: \texttt{liuqq2023@imut.edu.cn.}}}
\author[2]{Yaxian Zhang\thanks{E-mail: \texttt{yxzhang2016@lzu.edu.cn.}}}
\author[2]{Heping Zhang\footnote{The corresponding author. E-mail: \texttt{zhanghp@lzu.edu.cn.}}}

\affil[1]{\small College of Science, Inner Mongolia University of Technology, Hohhot, Inner Mongolia 010010, China}
\affil[2]{\small School of Mathematics and Statistics, Lanzhou University, Lanzhou, Gansu 730000, China}

\date{}

\maketitle

\setlength{\baselineskip}{20pt}
\noindent {\bf Abstract}:
Klein and Randi\'{c} (1985) proposed the concept of forcing number, which has an application in chemical resonance theory.
Let $G$ be a graph with a perfect matching $M$. The forcing number of $M$ is the smallest cardinality of a subset of $M$ that is contained only in one perfect matching $M$. The maximum forcing number of $G$ is the maximum value of forcing numbers over all perfect matchings of $G$. Kleinerman (2006) obtained that the maximum forcing number of $2n\times 2m$ quadriculated torus is $nm$. By improving Kleinerman's approach, we obtain the maximum forcing numbers of all 4-regular quadriculated graphs on torus except one class.

\vspace{2mm} \noindent{\textbf{Keywords}}  Perfect matching, maximum forcing number, quadriculated torus
\vspace{2mm}

\noindent{\textbf{MSC2020}} 05C70, 05C92

\section{\normalsize Introduction}
Let $G$ be a graph with a perfect matching $M$. A subset $S\subseteq M$ is called a \emph{forcing set} of $M$ if it is contained in no other perfect matchings of $G$. The smallest cardinality of a forcing set of $M$ is called the \emph{forcing number} of $M$, denoted by $f(G,M)$. The \emph{minimum} and \emph{maximum forcing number} of $G$, denoted by $f(G)$ and  $F(G)$, are respectively defined as the minimum and maximum values of $f(G,M)$ over all perfect matchings $M$ of $G$.

The concept of the forcing number of a perfect matching was first introduced by Klein and Randi\'{c} \cite{3,klein85} in 1985 when they studied the molecular resonance structures, which was called ``innate degree of freedom'' in chemical literatures. It was turned out that the perfect matchings with the maximum forcing number contribute more to the stability of molecule\cite{32}.
Afshani, Hatami and Mahmoodian \cite{5} pointed out that the computational complexity of the maximum forcing number of a graph is still an open problem.
Xu, Bian and Zhang \cite{27} obtained that maximum forcing numbers of hexagonal systems are equal to the resonant numbers. The same result also holds for polyominoes \cite{zhou2016,lin2017} and BN-fullerene graphs \cite{40}. Abeledo and Atkinson \cite{13} had already obtained that resonant numbers of 2-connected plane bipartite graphs can be computed in polynomial time. Thus, the maximum forcing numbers of such three classes of graphs can be solved in polynomial time.

The cartesian product of graphs $G$ and $H$ is denoted by $G\square H$. The maximum forcing numbers of the cartesian product of some special graphs, such as paths and cycles, have been obtained. Let $P_n$ and $C_n$ denote a path and a cycle with $n$ vertices, respectively. Pachter and Kim \cite{6}, Lam and Pachter \cite{9} obtained that $F(P_{2n}\square P_{2n})=n^2$ using different methods. In general, Afshani et al. \cite{5} proved that
$F(P_m\square P_n)=\lfloor\frac{m}{2}\rfloor\cdot\lfloor\frac{n}{2}\rfloor$ for even $mn$. Besides, they \cite{5} obtained that $F(P_{2m}\square C_{2n})=mn$ and $F(P_{2m+1}\square C_{2n})=mn+1$, and asked such a question: what is the maximum forcing number of a non-bipartite cylinder $P_{2m}\square C_{2n+1}$? Jiang and Zhang \cite{29} solved this problem and obtained that $F(P_{2m}\square C_{2n+1})=m(n+1)$. By a method of marking independent sets, Kleinerman \cite{16} obtained that $F(C_{2m}\square C_{2n})=mn$. Obviously, $C_{2m}\square C_{2n}$ is a special type of 4-regular quadriculated graphs on torus.

As early as 1991, Thomassen \cite{Tho} classified all 4-regular quadriculated graphs on torus (abbreviated to ``\emph{quadriculated tori}'') into two classes, which were reduced into one class by Li \cite{classfy}.
For $n\geq1$ and $m\geq 2$, a \emph{quadriculated torus} $T(n,m,r)$ is obtained from an $n\times m$ chessboard ($n$ rows, each consists of $m$ squares) by sticking the left and right sides together and then identifying the top and bottom sides with a torsion of $r$ squares where $1\leq r\leq m$ (see Fig. \ref{torsion}). Obviously, $T(n,m,m)$ is isomorphic to $C_n\square C_m$. Based on the parity of three parameters, quadriculated tori with perfect matchings can be divided into six classes $T(2n,2m,2r)$, $T(2n,2m,2r-1)$, $T(2n+1,2m,2r)$, $T(2n+1,2m,2r-1)$, $T(2n,2m+1,2r)$ and $T(2n,2m+1,2r-1)$.
\begin{figure}[h]
\centering
\includegraphics[height=3cm,width=6cm]{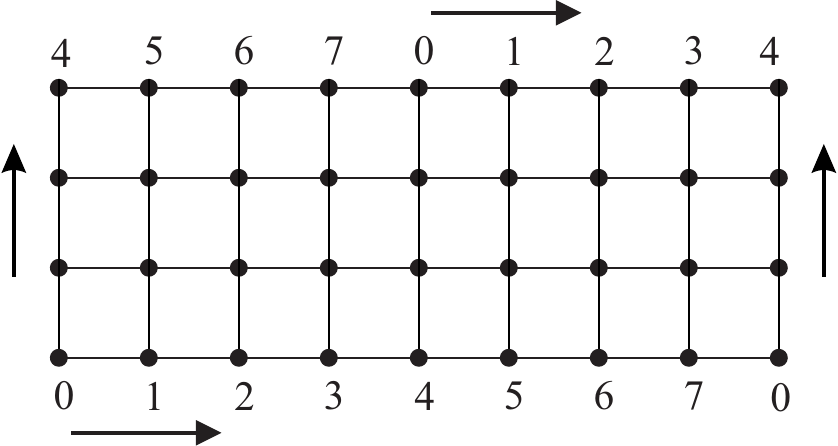}
\caption{\label{torsion}Quadriculated torus $T(3,8,4)$.}
\end{figure}

In this paper, we obtain a simple expression for the maximum forcing numbers of all quadriculated tori except for $T(2n+1,2m,2r-1)$. In Section 2, we give some notations and terminologies, and prove some crucial lemmas. In Section 3, we prove that $F(T(2n,2m+1,t))=n(m+1)$ for $1\leq t\leq 2m+1$ by choosing a fixed independent set. In Section 4, we obtain that $F(T(2n,2m,r))=mn+1$ if $(r,2m)=2$, and $F(T(2n,2m,r))=mn$ otherwise, where $(r,2m)$ represents the greatest common factor of $r$ and $2m$, and $1\leq r\leq 2m$. In Section 5, by another representation of the quadriculated torus, we obtain the maximum forcing number of $T(2n+1,2m,2r)$ for $1\leq r\leq m$.

\section{\normalsize Preliminaries}%
In this section, we give some notations and terminologies, and prove some important lemmas.

Let $T(n,m,r)$ be a quadriculated tori. According to positions of vertices in the chessboard, we label the vertices of $T(n,m,r)$ as $\{v_{i,j}| i\in Z_n, j \in Z_m\}$ (see Fig. \ref{nota}), where $Z_m:=\{0,1,\dots,m-1\}$. Hence $v_{i,0}$ is adjacent to $v_{i,m-1}$ for $i\in Z_{n}$, and $v_{0,j}$ is adjacent to $v_{n-1,m-r+j}$ for $j\in Z_{m}$.
\begin{figure}[h]
\centering
\includegraphics[height=3.3cm,width=7cm]{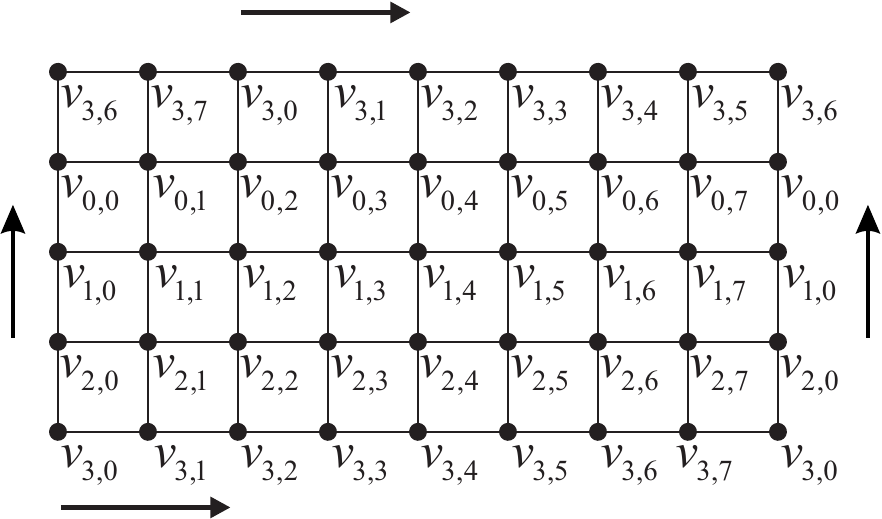}
\caption{\label{nota}Labels of the vertices in $T(4,8,2)$.}
\end{figure}

For $j\in Z_m$, let $v_{0,j}v_{1,j}\cdots v_{n-1,j}$ be a path called \emph{$j$-column},
and $v_{0,j}$ and $v_{n-1,j}$ are \emph{initial} and \emph{terminal} of $j$-column. For convenience, we call $j$-column a \emph{column} for $j\in Z_{m}$.
If initial $v_{0,j_2}$ of $j_2$-column is adjacent to terminal $v_{n-1,j_1}$ of $j_1$-column, that is, $j_2\equiv j_1+r$ (mod $m$), then $j_2$-column is the \emph{successor} of $j_1$-column.
Let $j_0$-, $j_1$-, \dots, $j_{g-1}$-columns be pairwise different such that $j_{k+1}$-column is the successor of $j_k$-column for each $k\in Z_g$. Then these $g$ columns form a cycle, called an \emph{$\mathrm{I}$-cycle}.  In \cite{LYZ}, we had proved the following lemma.
\begin{lem}\rm{\cite{LYZ}}\label{lem1} $T(n,m,r)$ has $(r,m)$ $\mathrm{I}$-cycles and each $\mathrm{I}$-cycle contains $\frac{m}{(r,m)}$ columns. Moreover, any consecutive $(r,m)$ columns lie on different $\mathrm{I}$-cycles.
\end{lem}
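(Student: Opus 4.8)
The plan is to translate the structure of I-cycles into the language of the cyclic group $Z_m$ and then invoke elementary facts about translations. First I would observe that, by definition, the $j_2$-column is the successor of the $j_1$-column precisely when $j_2\equiv j_1+r\pmod m$; thus the ``successor'' relation is exactly the functional graph of the translation map $\sigma\colon Z_m\to Z_m$ given by $\sigma(j)=j+r$. Since $\sigma$ is a bijection of $Z_m$, its functional graph is a disjoint union of directed cycles, and a list $j_0,j_1,\dots,j_{g-1}$ of pairwise distinct elements with $j_{k+1}=\sigma(j_k)$ for every $k\in Z_g$ is precisely one such cycle, i.e.\ a full orbit of $\sigma$. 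Hence the I-cycles of $T(n,m,r)$ correspond bijectively to the orbits of $\sigma$ acting on $Z_m$, and the number of columns on an I-cycle equals the size of the corresponding orbit.

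Next I would compute these orbits. The orbit of $j$ is $\{j, j+r, j+2r, \dots\}$ taken modulo $m$, which is the coset $j+\langle r\rangle$ of the cyclic subgroup $\langle r\rangle\le Z_m$ generated by $r$. A standard fact about $Z_m$ is that $\langle r\rangle=\{0,d,2d,\dots,m-d\}$ where $d:=(r,m)$, so $|\langle r\rangle|=m/d$. Therefore every orbit has size $\frac{m}{(r,m)}$, and hence the number of orbits is $m/(m/d)=(r,m)$, which gives the first sentence of the lemma.

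For the ``moreover'' statement I would use that two columns $j$ and $j'$ lie on the same I-cycle if and only if $j-j'\in\langle r\rangle$, i.e.\ if and only if $j\equiv j'\pmod{d}$. The $d=(r,m)$ columns with consecutive indices $j,j+1,\dots,j+d-1$ are pairwise incongruent modulo $d$, so they lie on $d$ pairwise distinct I-cycles; since there are exactly $d$ I-cycles in all, every consecutive block of $(r,m)$ columns meets each I-cycle exactly once.

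I do not expect a serious obstacle: the only point needing care is to verify that the combinatorial definition of an I-cycle (a closed list of pairwise distinct columns in which each is the successor of the previous one) coincides with a \emph{full} orbit of $\sigma$ rather than being merely contained in one. This holds because in the functional graph of a permutation every vertex has in-degree and out-degree $1$, so such a cycle can neither be extended nor broken, and these cycles partition $Z_m$. Everything else is routine arithmetic in a finite cyclic group.
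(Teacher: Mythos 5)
Your proof is correct. The paper does not prove this lemma itself---it is quoted from the authors' earlier preprint [LYZ]---so there is no in-paper argument to compare against, but your reduction to the orbits of the translation $\sigma(j)=j+r$ on $Z_m$ is exactly the standard (and surely intended) argument: the orbits are the cosets of $\langle r\rangle=\langle (r,m)\rangle$, each of size $m/(r,m)$, and consecutive indices are pairwise incongruent modulo $(r,m)$, which gives the ``moreover'' clause. Your final remark, checking that an $\mathrm{I}$-cycle as defined (a closed list of pairwise distinct columns, each the successor of the previous) must be a full orbit because $\sigma$ is a bijection, is the one point that genuinely needs saying, and you handle it correctly.
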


Intuitively, we call $v_{i,j}v_{i,j+1}$ a \emph{horizontal edge} and $v_{i,j}v_{i+1,j}$ a \emph{vertical edge} for $i\in Z_n$ and $j\in Z_{m}$.
Obviously, all vertical edges form $(r,m)$ $\mathrm{I}$-cycles, and all horizontal edges form $n$ $\mathrm{II}$-cycles (consisting of all vertices and edges on a row). Preserving the horizontal and vertical edges, we can obtain another representation of this quadriculated tori, denoted by $T^*(n,m,r)$, in which all vertices of a $\mathrm{I}$-cycle of $T(n,m,r)$ lie on a column and all vertices of a $\mathrm{II}$-cycle of $T(n,m,r)$ are divided into different rows (see Fig. \ref{obsev}). Therefore, $\mathrm{I}$-cycles (resp. $\mathrm{II}$-cycles) in $T(n,m,r)$ corresponds to $\mathrm{II}$-cycles (resp. $\mathrm{I}$-cycles) in $T^*(n,m,r)$.
For $i\in Z_{n}$, the subgraph of $T(n,m,r)$ induced by all vertices of any consecutive two rows $$\{v_{i,0},v_{i,1},\dots, v_{i,m-1}\}\cup \{v_{i+1,0},v_{i+1,1},\dots, v_{i+1,m-1}\}$$ is denoted by $R_{i,i+1}$. Then $R_{i,i+1}$ contains a subgraph isomorphic to $C_{m}\square P_2$. Particularly, $R_{i,i+1}$ is isomorphic to $C_{m}\square P_2$ for $n\geq 2$ where $i\in Z_n$.

Relabeling the vertices of $T(n,m,r)$ according to $\mathrm{I}$-cycle, we can obtain the following lemma. For details, see Section 2 of ref. \cite{LYZ}.
\begin{figure}[h]
\centering
\includegraphics[height=5.7cm,width=13cm]{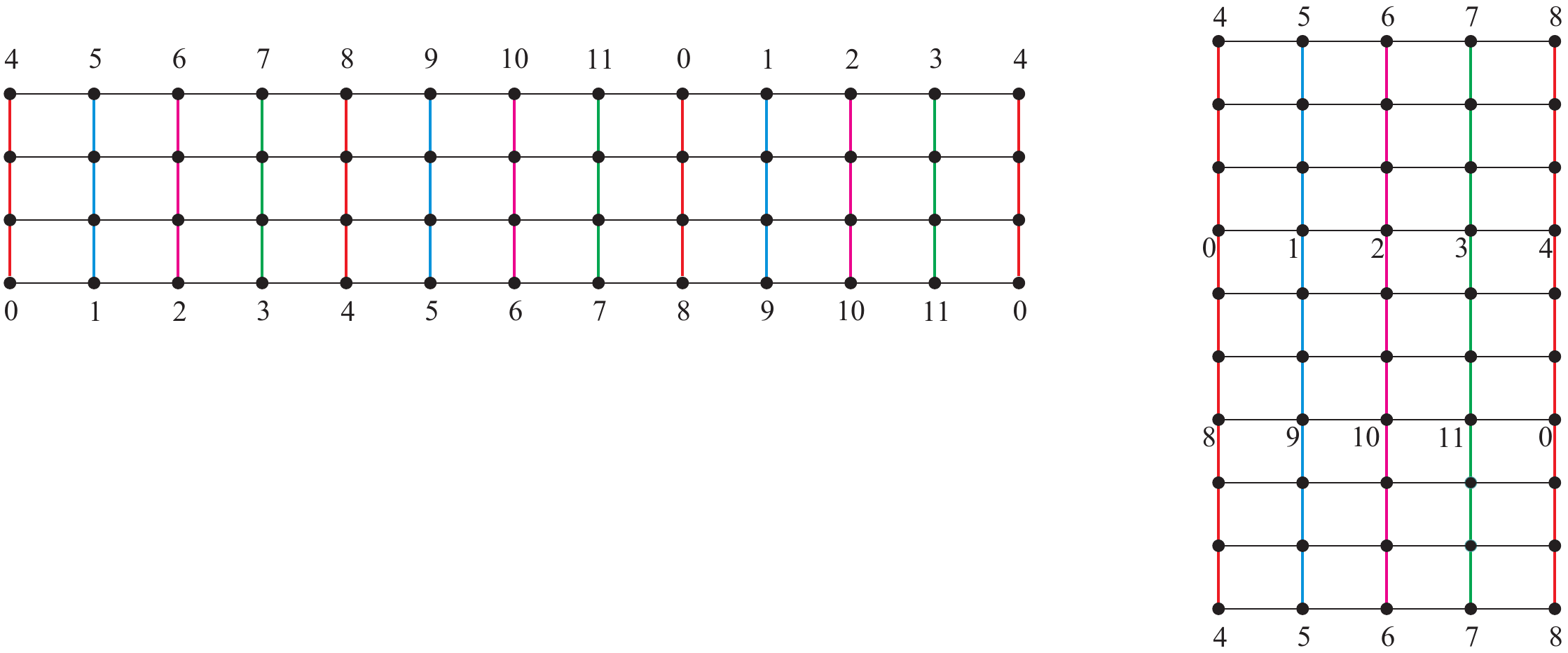}
\caption{\label{obsev} Quadriculated tori $T(3,12,8)$ and $T(4,9,3)=T^*(3,12,8)$.}
\end{figure}
\begin{lem}\rm{\cite{LYZ}}\label{drawing} For $n\geq1$, $m\geq 2$ and $1\leq r\leq m$, $T^*(n,m,r)=T((r,m), \frac{mn}{(r,m)},(\frac{m}{(r,m)}-k)n)$, where $0\leq k\leq \frac{m}{(r,m)}-1$ is an integer satisfying the equation $(r,m)\equiv rk\ (\text{mod\ }  m).$ Furthermore, $T^{**}(n,m,r)=T(n,m,r)$.
\end{lem}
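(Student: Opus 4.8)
The plan is to prove the first assertion by writing down an explicit relabelling of the vertices and checking it is an isomorphism of chessboard tori; the second assertion then follows at once. First note that the integer $k$ in the statement is well defined: writing $d:=(r,m)$, and using $\gcd(\tfrac rd,\tfrac md)=1$, the congruence $rk\equiv d\pmod m$ — equivalently $\tfrac rd\,k\equiv1\pmod{\tfrac md}$ — has a unique solution $k\in\{0,1,\dots,\tfrac md-1\}$. Put $M:=\tfrac{mn}{d}$. Recall that passing from $T$ to $T^*$ interchanges the two edge classes of the chessboard drawing, so the $d$ $\mathrm{I}$-cycles of $T(n,m,r)$ (the cycles made up of vertical edges) become the $\mathrm{II}$-cycles, i.e. the $d$ rows, of $T^*$; in particular $T^*$ is a chessboard torus with $d$ rows. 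Hence it suffices to exhibit a bijection $\phi$ from $V(T(n,m,r))$ onto the vertex set of $T(d,M,(\tfrac md-k)n)$ that sends the vertical edges of $T$ to horizontal edges of the latter, and the horizontal edges of $T$ to vertical edges of the latter. By Lemma \ref{lem1}, $T(n,m,r)$ has exactly $d$ $\mathrm{I}$-cycles, the $a$-th one ($a\in Z_d$) being built from the columns $j$ with $j\equiv a\pmod d$ in the cyclic order $a,\,a+r,\,a+2r,\dots$, and together they partition the $mn$ vertices of $T$ into $d$ cycles of length $M$.

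I would set $\phi(v_{i,j})=w_{a,b}$, where $w_{a,b}$ ($a\in Z_d$, $b\in Z_M$) are the vertices of $T(d,M,(\tfrac md-k)n)$, $a=j\bmod d$, and $b=M-1-tn-i$ with $t\in\{0,1,\dots,\tfrac md-1\}$ the unique index satisfying $j\equiv a+tr\pmod m$; thus $b$ records the position of $v_{i,j}$ when the $a$-th $\mathrm{I}$-cycle of $T$ is read off in the \emph{reverse} cyclic direction. Then $\phi$ is a bijection, since the triple $(a,t,i)$ and the pair $(i,j)$ determine one another and both vertex sets have $dM=mn$ elements. For a vertical edge $v_{i,j}v_{i+1,j}$ of $T$, and likewise for a wrap-around edge $v_{n-1,j}v_{0,j+r}$ of an $\mathrm{I}$-cycle, the two endpoints have equal $a$ and $b$-values differing by $1$ modulo $M$, so $\phi$ carries it to a horizontal edge of the target, as required.

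Now take a horizontal edge $v_{i,j}v_{i,j+1}$ of $T$. If $j+1\not\equiv0\pmod d$, then $a$ increases by $1$, while subtracting the congruences defining $t$ and $t'$ gives $(t'-t)r\equiv0\pmod m$, hence $t'=t$ (both being in $\{0,\dots,\tfrac md-1\}$) and $b'=b$, so $\phi$ carries the edge to a vertical edge $w_{a,b}w_{a+1,b}$. If $j+1\equiv0\pmod d$, i.e. $a=d-1$ and $a'=0$, the same subtraction gives $(t'-t)r\equiv d\pmod m$, so $t'-t\equiv k\pmod{\tfrac md}$ by the defining congruence of $k$; consequently $b-b'=(t'-t)n\equiv kn\pmod M$, and since the torsion edges of $T(d,M,(\tfrac md-k)n)$ are exactly the pairs $w_{0,\beta}w_{d-1,\beta+kn}$ (indices mod $M$, using $M-(\tfrac md-k)n\equiv kn$), $\phi$ carries $v_{i,j}v_{i,j+1}$ to one of them. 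Therefore $\phi$ is the desired isomorphism, which proves the first assertion. For the second assertion, the quickest route is to observe that interchanging the two edge classes twice returns the original drawing, so $T^{**}=T$; alternatively, apply the first assertion to $T^*=T(d,M,(\tfrac md-k)n)$ and use $(k,\tfrac md)=1$, which forces the gcd of the new torsion with $M$ to equal $n$, after which a short computation restores the parameters $(n,m,r)$.

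The whole argument is bookkeeping rather than insight. The single point that needs care is the choice of orientation conventions built into $\phi$ — here, the \emph{reversed} traversal of each $\mathrm{I}$-cycle — which is precisely what makes the image appear in the stated normal form $T(d,M,(\tfrac md-k)n)$ instead of a mirror image of it (the two are isomorphic, but only the former is the normal form the statement asks for). Once that is pinned down, everything comes down to the three elementary modular computations above, and the case $j+1\equiv0\pmod d$ — the step crossing from the $(d-1)$-st $\mathrm{I}$-cycle back to the $0$-th — is the one in which the congruence $rk\equiv(r,m)\pmod m$, hence $k$ itself, is forced.
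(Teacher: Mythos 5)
Your proof is correct, and it is exactly the argument the paper points to: the paper does not prove this lemma itself but cites \cite{LYZ} with the remark that it follows by ``relabeling the vertices according to $\mathrm{I}$-cycles,'' which is precisely your map $\phi$ (including the reversed traversal needed to land on the normal form $T((r,m),\tfrac{mn}{(r,m)},(\tfrac{m}{(r,m)}-k)n)$ rather than its mirror image, and the modular computation that forces $rk\equiv(r,m)\pmod m$ at the crossing from the last $\mathrm{I}$-cycle back to the first). No gaps; the bookkeeping checks out.
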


For a non-empty subset $S\subseteq V(G)$, the \emph{subgraph induced by $S$}, denoted by $G[S]$, is a graph whose vertex set is $S$ and edge set consists of those edges of $G$ that have both end vertices in $S$. The induced subgraph $G[V(G)\setminus S]$ is denoted by $G-S$. For an edge subset $F\subseteq E(G)$, we use $V(F)$ to denote the set of all end vertices of edges in $F$.

Let $G$ be a graph with a perfect matching $M$. We give an independent set $T$ of $G$ called \emph{marked vertices} of $G$. Define
$M_T=\{e\in M\ |\ e \text{\ has an end vertex in }T\}.$ Then $M_T\subseteq  M$ and $|M_T|=|T|$. A cycle of $G$ is \emph{$M$-alternating} if its edges appear alternately in $M$ and off $M$.
\begin{lem}\label{forcingset} Let $G$ be a graph with a perfect matching $M$. If the union of all paths of length 2 whose initial and terminal lie in $T$ contains no $M$-alternating cycles, then $f(G,M)\leq |M|-|T|$.
\end{lem}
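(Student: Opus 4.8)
The plan is to produce a single forcing set of $M$ that is cheap enough. The natural candidate is $S:=M\setminus M_T$; since $T$ is independent, every edge of $M$ meets $T$ in at most one vertex, so matching a vertex of $T$ with the $M$-edge covering it is a bijection onto $M_T$, whence $|M_T|=|T|$ and $|S|=|M|-|T|$. It thus suffices to show that $S$ is a forcing set, i.e.\ that $M$ is the unique perfect matching of $G$ containing $S$. So I would assume, for contradiction, that there is a perfect matching $M'\neq M$ with $S\subseteq M'$, and aim to produce an $M$-alternating cycle inside the union $H$ of all length-$2$ paths with both ends in $T$, contradicting the hypothesis.

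First I would pass to $M\triangle M'$: since $S\subseteq M\cap M'$, this is a nonempty disjoint union of $M$-alternating cycles meeting no edge of $S$, so I may pick one such cycle $C$ all of whose $M$-edges lie in $M\setminus S=M_T$. The heart of the proof is a structural claim about $C$: it alternates between vertices of $T$ and vertices outside $T$. To see this I would argue that each $M$-edge of $C$, lying in $M_T$, has exactly one end in $T$ (again using independence of $T$); hence the $M$-edges of $C$ form a perfect matching of $C$ pairing $V(C)\cap T$ with $V(C)\setminus T$, so these two sets are equinumerous, say of size $k$, and $C$ has length $2k$. Then a double count of incidences between the $k$ non-$M$-edges of $C$ and the $k$ vertices of $V(C)\cap T$ --- each such vertex meets exactly one non-$M$-edge of $C$, and each non-$M$-edge meets at most one vertex of $T$ --- forces every non-$M$-edge of $C$ to have exactly one end in $T$ as well. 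Hence $C$ genuinely alternates marked/unmarked, and I expect establishing this dichotomy to be the main obstacle; everything else is bookkeeping.

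Finally, with $C$ alternating marked/unmarked, each non-$T$-vertex $u$ of $C$ has both of its $C$-neighbours in $T$, so $u$ and those two neighbours form a length-$2$ path with both ends in $T$; letting $u$ range over $V(C)\setminus T$, these $k$ paths together cover every edge of $C$, so $C\subseteq H$. But $C$ is $M$-alternating, contradicting the hypothesis that $H$ contains no $M$-alternating cycle. Therefore no such $M'$ exists, $S$ is a forcing set of $M$, and $f(G,M)\le |S|=|M|-|T|$. (The only degeneracy to watch is a would-be $2$-cycle component of $M\triangle M'$, which is excluded once $G$ is simple, the case relevant here.)
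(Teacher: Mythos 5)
Your proposal is correct and follows essentially the same route as the paper: take $S=M\setminus M_T$, and show that any $M$-alternating cycle avoiding $S$ would alternate between $T$ and $V(G)\setminus T$ and hence lie in the union of length-$2$ paths with ends in $T$. The only difference is that you spell out, via the double count on non-$M$-edges, the alternation claim that the paper asserts in one line from the independence of $T$.
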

\begin{proof}We prove that $G[V(M_T)]$ contains no $M$-alternating cycles. Suppose to the contrary that $G[V(M_T)]$ contains an $M$-alternating cycle $C$. Then $C$ is also an $M_T$-alternating cycle. Since $T$ is an independent set, half vertices of $C$ are marked, and marked and unmarked vertices appear alternately. Thus, $C$ can be viewed as the union of paths of length two whose initial and terminal lie in $T$, which is a contradiction.

Since $G[V(M_T)]$ contains no $M$-alternating cycles, $G[V(M_T)]$ has a unique perfect matching. Thus, $M\setminus M_T$ is a forcing set of $M$, and $f(G,M)\leq |M\setminus M_T|=|M|-|T|$.
\end{proof}

For convenience, ``the union of all paths of length 2 whose initial and terminal are marked vertices'' is defined as ``\emph{marked subgraph}''.

Next we give the concept of $2\times 2$-polyomino, which is a kind of general ``marked subgraph''.
A \emph{polyomino} is a finite connected subgraph in the infinite plane square grid in which every interior face is surrounded by a square and every edge belongs to at least one square. A \emph{$2\times 2$-polyomino} is also a polyomino which is obtained by replacing each square in a polyomino by a $2\times 2$ chessboard (see Fig. \ref{polyominog}).
\begin{figure}[h]
\centering
\includegraphics[height=3.2cm,width=7cm]{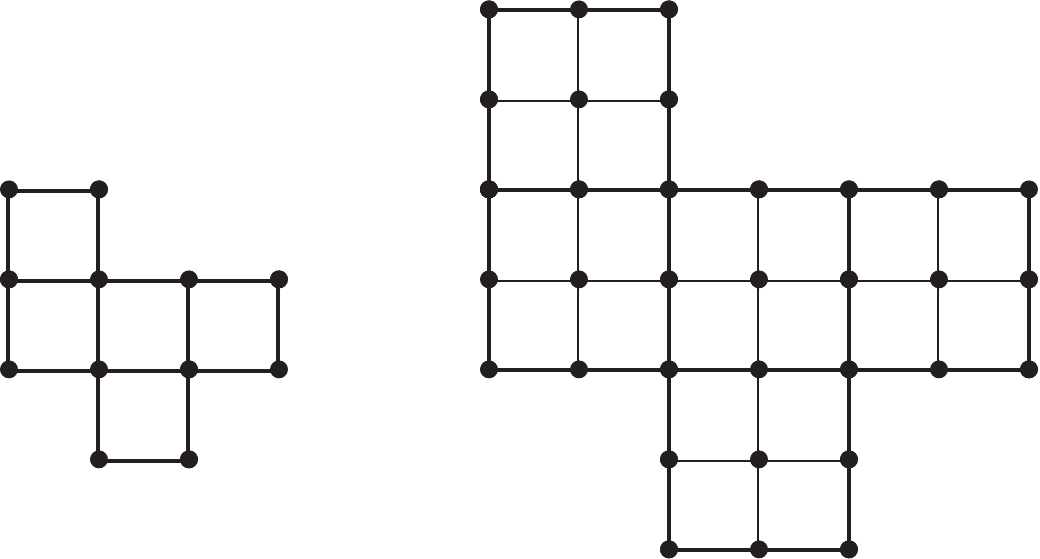}
\caption{\label{polyominog} A polyomino and its corresponding $2\times 2$-polyomino.}
\end{figure}

An \emph{interior vertex} of a plane graph is a vertex which is not on the boundary of the unbounded face. For a polyomino, an interior vertex means a vertex of degree 4. By the proof of Theorem 3.2 in \cite{29}, Jiang and Zhang obtained the following result.
\begin{lem}\label{polyomino}\rm{\cite{29}} A $2\times 2$-polyomino has an odd number of interior vertices.
\end{lem}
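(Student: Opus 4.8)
We plan to prove Lemma~\ref{polyomino} by Euler's formula. Let $P$ be the polyomino, with $s$ cells (unit squares), $V$ vertices and $E$ edges, and let $P'$ be the associated $2\times 2$-polyomino. Scale so that $P$ lies on the integer grid and $P'$ is obtained by doubling it, so that each cell $c$ of $P$ becomes a $2\times 2$ block of four unit cells of $P'$. Then every vertex of $P'$ is of exactly one of three types: a vertex of $P$ (both coordinates even), the midpoint of an edge of $P$ (exactly one coordinate odd), or the centre of a cell of $P$ (both coordinates odd).

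First I would establish the local count: the interior (degree-$4$) vertices of $P'$ are precisely (i) the $s$ cell centres, (ii) the midpoints of the edges of $P$ that lie in two cells of $P$, and (iii) the degree-$4$ vertices of $P$. Indeed, a type-(i) vertex is interior to a $2\times 2$ block and so always has degree $4$ in $P'$; a type-(ii) vertex on an edge $e$ is joined in $P'$ to the two halves of $e$ and to one further edge reaching into each cell of $P$ incident with $e$, hence has degree $2+\bigl(\text{number of cells of }P\text{ at }e\bigr)$, which equals $4$ exactly when $e$ lies in two cells; and a type-(iii) vertex $v$ is joined in $P'$ to exactly one half-edge along each edge of $P$ at $v$ (every edge of a polyomino lies in a cell, so each such half-edge survives), hence has the same degree in $P'$ as in $P$. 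Writing $e_{\mathrm{int}}$ for the number of edges of $P$ lying in two cells and $v_4$ for the number of degree-$4$ vertices of $P$, the number of interior vertices of $P'$ is therefore $s+e_{\mathrm{int}}+v_4$.

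Next I would make the parity of $s+e_{\mathrm{int}}+v_4$ transparent. Since $P$ is a polyomino it has no holes---its bounded faces are precisely its $s$ cells---hence it is simply connected, and it has no ``pinch'' vertex; so its boundary is a single cycle, which has equally many vertices as edges, say $b$ of each. A vertex of $P$ lies off this boundary iff all four cells around it belong to $P$, i.e.\ iff it has degree $4$; an edge of $P$ lies off the boundary iff it lies in two cells of $P$. Thus $v_4=V-b$ and $e_{\mathrm{int}}=E-b$. Euler's formula for the connected plane graph $P$, which has $s+1$ faces, gives $V-E+(s+1)=2$, i.e.\ $V-E+s=1$. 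Combining,
\[
\#\{\text{interior vertices of }P'\}\;=\;s+e_{\mathrm{int}}+v_4\;=\;s+(E-b)+(V-b)\;=\;(V-E+s)+2(E-b)\;=\;1+2(E-b),
\]
which is odd, as claimed.

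The main obstacle is the local count of the first step: one must be systematic about the three vertex types of $P'$, and also record the standard facts that a polyomino has a single boundary cycle and that its degree-$4$ vertices coincide with its vertices off that boundary; after this the Euler-formula manipulation is routine. Alternatively one could induct on $s$ (base case $s=1$: a single $2\times 2$ block has exactly one interior vertex; inductive step: attaching one cell to $P$ changes $s+e_{\mathrm{int}}+v_4$ by an even amount), but verifying the inductive step requires essentially the same local analysis, now organised by the ways a new cell can be glued on.
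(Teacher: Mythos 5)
The paper offers no proof of this lemma to compare against: it is imported verbatim from the proof of Theorem 3.2 of Jiang and Zhang \cite{29}. Your Euler-formula argument is therefore a genuine, self-contained alternative, and its core is sound: the classification of the vertices of $P'$ into cell centres, edge midpoints and original vertices, the resulting count $s+e_{\mathrm{int}}+v_4$ of degree-$4$ vertices of $P'$, and the identity $V-E+s=1$ are all correct (the local count checks out on a single cell, a domino and a $2\times 2$ block of cells), and they do extract the parity cleanly.

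The one step you assert rather than prove is that a polyomino ``has no pinch vertex'', so that its outer boundary is a single cycle with equally many vertices and edges. This does not follow from the paper's definition: two cells meeting only at a corner form a polyomino (the graph is connected, every interior face is a square, every edge lies in a square), and the shared corner is a degree-$4$ vertex lying on the boundary of the unbounded face. There both $v_4=V-b$ and $e_{\mathrm{int}}=E-b$ fail, and with the purely topological reading of ``interior vertex'' the lemma itself fails (that $2\times 2$-polyomino has exactly two vertices off the outer boundary). The statement is saved by the paper's convention, which your first paragraph also adopts, that ``interior vertex'' means ``vertex of degree $4$'': the outer boundary is then a closed walk traversing each boundary edge once (no edge is a bridge, since every edge lies in a square) and visiting each pinch vertex twice, so $E-e_{\mathrm{int}}=(V-v_4)+2p$ with $p$ the number of pinch vertices, and your count becomes $s+e_{\mathrm{int}}+v_4=1+2e_{\mathrm{int}}+2p$, still odd. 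In the paper's applications the polyomino is always $\mathrm{Int}[C]$ for a cycle $C$, whose outer boundary is the simple cycle $C$ itself, so your argument as written already covers every instance actually used; to prove the lemma in its stated generality you should either add the short correction above or restrict the definition of polyomino to exclude corner-only contacts.
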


\section{\normalsize  The maximum forcing number of $T(2n,2m+1,r)$ for $1\leq r\leq 2m+1$}
In this section, we will obtain the maximum forcing number of $T(2n,2m+1,r)$ by the method of marking independent sets for $1\leq r\leq 2m+1$.

For $T(2n,m,r)$, we define some subsets of vertices and edges. For $i\in Z_{n}$, let $$X_{i}=\{v_{i,2k}|k\in Z_{\lfloor\frac{m}{2}\rfloor}\}  \text{   and   } Y_{i}=\{v_{i,2k+1}|k\in Z_{\lfloor\frac{m}{2}\rfloor}\}.$$ For $j\in Z_{m}$, let $W_{j}=\{v_{2k,j}v_{2k+1,j}|k\in Z_{n}\}$, $$W^{1}_{j}=\{v_{4k+2,j}v_{4k+3,j}|k\in Z_{\lfloor\frac{n}{2}\rfloor}\} \text{  and  }  W^{2}_{j}=\{v_{4k,j}v_{4k+1,j}|k\in Z_{\lfloor\frac{n+1}{2}\rfloor}\}$$ be two subsets of $W_j$.
\begin{thm}\label{odd} For $n, m\geq 1$ and $1\leq r\leq 2m+1$, $F(T(2n,2m+1,r))=(m+1)n$.
\end{thm}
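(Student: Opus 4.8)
The plan is to prove the equality $F(T(2n,2m+1,r))=(m+1)n$ by establishing the two inequalities separately, using the marked-vertex machinery of Lemma~\ref{forcingset} and Lemma~\ref{polyomino} for the upper bound, and a direct counting argument over the rows $R_{i,i+1}$ for the lower bound. First, for the \textbf{lower bound} $F(T(2n,2m+1,r))\geq (m+1)n$, I would exhibit one perfect matching $M$ with $f(T(2n,2m+1,r),M)\geq (m+1)n$. The natural candidate is the "all-vertical" matching $M=\bigcup_{j\in Z_{2m+1}}W_j$ (or its obvious variant), which is a perfect matching since every column has even length $2n$. To bound its forcing number from below, I would argue that any forcing set must meet every $M$-alternating 4-cycle; since $2m+1$ is odd, each double row $R_{i,i+1}\cong C_{2m+1}\square P_2$ carries many $M$-alternating 4-cycles, and a careful packing/counting of disjoint such cycles (or a discharging over the $n$ pairs of rows, each contributing $m+1$) forces $|S|\geq (m+1)n$. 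This is essentially Kleinerman's counting idea adapted to the odd circumference; the cyclic structure of the $\mathrm{I}$-cycles from Lemma~\ref{lem1} controls how the vertical edges glue across the torsion.

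For the \textbf{upper bound} $F(T(2n,2m+1,r))\leq (m+1)n$, the strategy is to show that \emph{for every} perfect matching $M$ there is an independent marked set $T$ with $|T|=|M|-(m+1)n = (2m+1)n-(m+1)n = mn$ such that the marked subgraph contains no $M$-alternating cycle; then Lemma~\ref{forcingset} gives $f(G,M)\leq (2m+1)n-mn=(m+1)n$. I would take $T$ to be a fixed independent set built from alternate entries of alternate rows — concretely something like $T=\bigcup_{k\in Z_n}(X_{2k}\cup Y_{2k+1})$ or the analogous choice using the $X_i,Y_i$ partitions defined just before the theorem — chosen so that $|T|=mn$ and so that $T$ is independent in $T(2n,2m+1,r)$ regardless of the torsion $r$ (here the evenness of the number of rows, $2n$, is what lets the marked set avoid the wrap-around adjacency $v_{0,j}\sim v_{2n-1,2m+1-r+j}$). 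The key point to verify is that the marked subgraph — the union of all length-2 paths with both endpoints in $T$ — contains no $M$-alternating cycle. Because $T$ lives in alternate rows, any such cycle would be forced to have a $2\times 2$-polyomino-like structure: it alternates marked/unmarked vertices, and the unmarked "bridge" vertices between consecutive marked vertices make the region it bounds a $2\times 2$-polyomino (up to the identification on the torus). Invoking Lemma~\ref{polyomino}, such a polyomino has an odd number of interior vertices, while an $M$-alternating cycle would force that count to be even (each interior vertex is matched inside) — a contradiction. The torus identifications need to be handled: a putative alternating cycle could be non-contractible and wrap around an $\mathrm{I}$-cycle or $\mathrm{II}$-cycle, so I would use Lemma~\ref{drawing} and the two representations $T$ and $T^*$ to reduce a wrapping cycle to the planar polyomino case, or argue directly that the parity obstruction still applies after cutting the torus open along a suitable cycle.

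The \textbf{main obstacle} I expect is the upper bound, specifically proving the marked subgraph is $M$-alternating-cycle-free \emph{uniformly in $r$ and uniformly over all perfect matchings $M$}. The difficulty is twofold: (i) the torsion $r$ changes the global gluing, so the marked set must be chosen with the cyclic $\mathrm{I}$-cycle decomposition (Lemma~\ref{lem1}) in mind, and non-contractible alternating cycles — which have no planar polyomino analogue — must be excluded by a separate parity or homology argument; and (ii) since $M$ is arbitrary, the alternating cycles in the marked subgraph can be long and winding, so the reduction to a genuine $2\times 2$-polyomino (to apply Lemma~\ref{polyomino}) requires care in showing that the unmarked vertices really do play the role of polyomino interior vertices and that no edge is double-counted. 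Once the structural claim "marked subgraph has no $M$-alternating cycle" is secured, Lemma~\ref{forcingset} closes the upper bound immediately, and matching it with the lower-bound construction yields $F(T(2n,2m+1,r))=(m+1)n$ for all $1\leq r\leq 2m+1$.
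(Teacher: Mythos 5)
Your overall strategy coincides with the paper's: lower bound from the all-vertical matching $M_1=\bigcup_j W_j$ analyzed double-row by double-row, upper bound from Lemma~\ref{forcingset} with an independent marked set of size $mn$ and the parity obstruction of Lemma~\ref{polyomino}. However, there are two concrete gaps. First, in the lower bound, ``packing disjoint $M$-alternating 4-cycles'' in $R_{2i,2i+1}\cong C_{2m+1}\square P_2$ can only yield $m$ disjoint cycles per double row, hence $mn$ in total, not $(m+1)n$. The extra $+1$ per double row comes from the fact that the $2m+1$ alternating 4-cycles overlap cyclically around an odd cycle, so a set of matching edges meeting all of them must have size at least $\lceil(2m+1)/2\rceil=m+1$; this hitting-set argument on an odd cycle (equivalently, the known value $F(C_{2m+1}\square P_2)=m+1$) is what the paper invokes, and your ``disjoint cycles'' mechanism would not reach it.

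Second, and more seriously, the upper bound hinges on exactly the point you flag as the main obstacle but do not resolve: excluding alternating cycles that wrap around the torus. Your candidate $T=\bigcup_{k\in Z_n}(X_{2k}\cup Y_{2k+1})$ has size $2mn$ rather than $mn$ (it marks every row), and marking ``alternate entries of alternate rows'' without further adjustment does not by itself prevent the marked subgraph from containing non-contractible cycles. The paper's resolution is a specific design, not a generic reduction: it places the $m$ marked vertices only on rows $1,3,\dots,2n-1$, replaces $v_{i,0}$ by $v_{i,2m}$ in each $X_i$ so that the odd number $2m+1$ of columns leaves a column gap, and shifts the row-$(2n-1)$ marks by the torsion (the set $X^*$ anchored at column $2m+1-r$) so that no length-2 path between marked vertices passes through any vertex of row $0$. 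This forces the marked subgraph to be a genuine plane graph — a $2\times 2$-polyomino with a path attached — so Lemma~\ref{polyomino} applies directly; no homology or cutting argument is needed. Your proposal correctly identifies where the difficulty lies but leaves the decisive construction unspecified, so as written the upper bound is not established.
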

\begin{proof}
Let $M_1=W_0\cup W_1\cup \cdots \cup W_{2m}$ be a perfect matching of $T(2n,2m+1,r)$ (see Fig. \ref{fig111}). We will prove that $f(T(2n,2m+1,r),M_1)=(m+1)n$.
\begin{figure}[h]
\centering
\includegraphics[height=3.6cm,width=11.8cm]{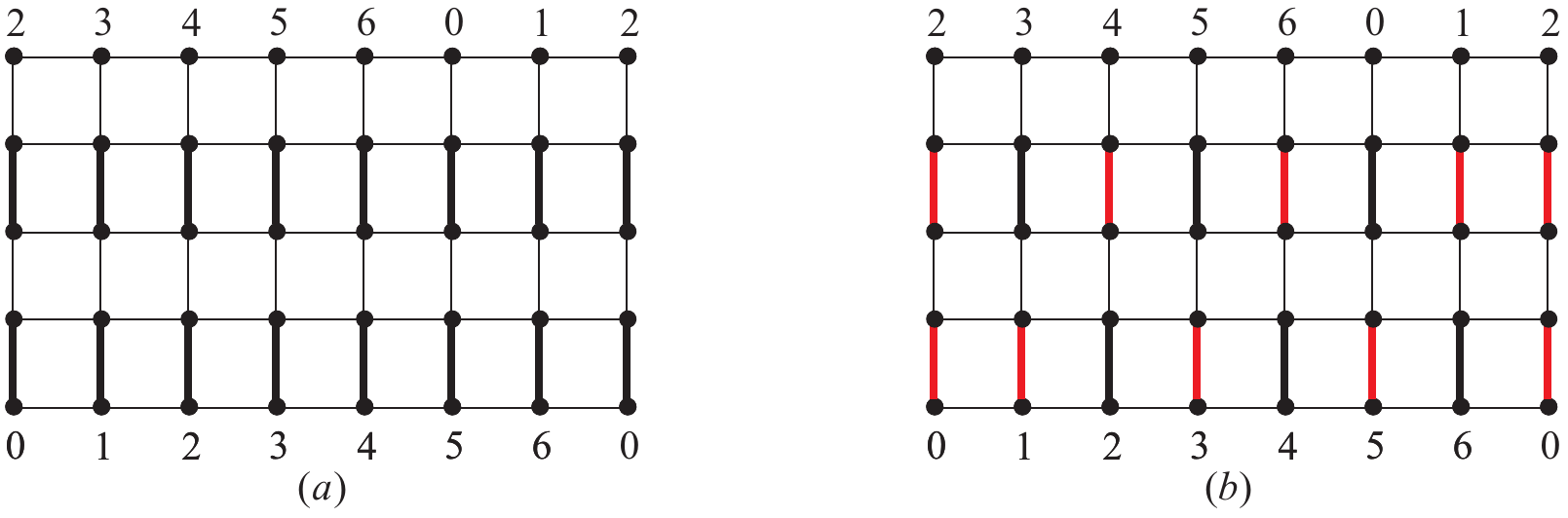}
\caption{\label{fig111}The perfect matching $M_1$ of $T(4,7,5)$, and a forcing set of $M_1$ shown in red lines.}
\end{figure}
For $i\in Z_n$, since $R_{2i,2i+1}$ contains a subgraph isomorphic to $C_{2m+1}\square P_2$, any forcing set of $M_1\cap E(R_{2i,2i+1})$ has size at least $m+1$. Thus, $M_1$ has the forcing number at least $n(m+1)$. Let $S=W_0\cup W^1_1\cup W^2_2\cup W^1_3\cup W^2_4\cup \cdots \cup W^1_{2m-1}\cup  W^2_{2m}$ be a subset of $M_1$ shown as red lines in Fig. \ref{fig111}(b), so that exactly $m+1$ edges of $R_{2i,2i+1}$ are chosen to belong to $S$. Obviously, $S$ is a forcing set of $M_1$ with size $n(m+1)$. Hence, we obtain that  $f(T(2n,2m+1,r), M_1)=n(m+1)$.

For any perfect matching $M$ of $T(2n,2m+1,r)$, we will choose an independent set $T$ of size $mn$ such that ``marked subgraph'' contains no $M$-alternating cycles. By Lemma \ref{forcingset}, we have $$f(T(2n,2m+1,r),M)\leq |M|-|T|=(2m+1)n-mn=(m+1)n.$$
By the arbitrariness of $M$, we have $F(T(2n,2m+1,r))\leq(m+1)n$.

To achieve this goal, we will take $m$ appropriate vertices on 1, 3, $\dots$, $2n-1$ rows. Let $X'_{i}=(X_i-\{v_{i,0}\})\cup \{v_{i,2m}\}$ for $i\in Z_{2n-1}$ and $$X^*=\{v_{2n-1,2m+1-r}\}\cup\{v_{2n-1,2m+1-r+j}|j=3,5,\dots,2m-1\}.$$
Take marked vertices $T=X'_1\cup X'_3\cup \cdots \cup X'_{2n-3}\cup X^*$ shown as Fig. \ref{fig112}.
\begin{figure}[h]
\centering
\includegraphics[height=4.8cm,width=16cm]{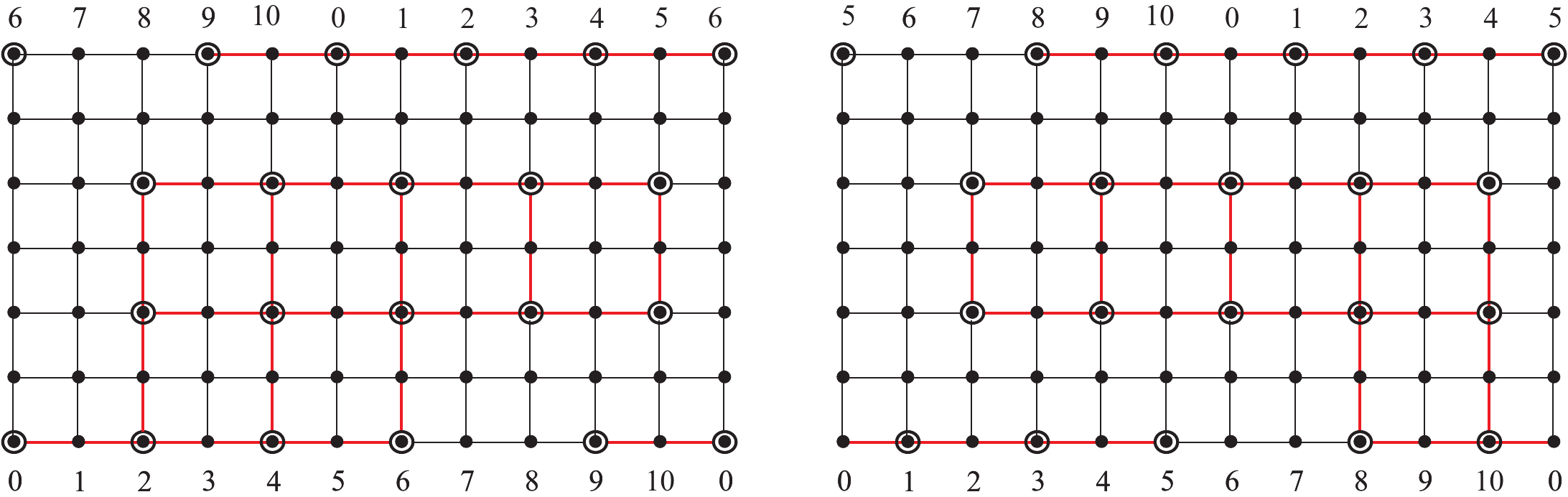}
\caption{\label{fig112}Marked vertices of $T(6,11,5)$  and $T(6,11,6)$.}
\end{figure}

From left to right, we choose 1'st, 4'th, 6'th, $\dots$, $(2m)$'th vertices in the first row and 3'th, 5'th, $\dots$, $(2m+1)$'th vertices in the third row as marked vertices. Hence, all edges incident with $v_{0,j}$ are not contained in ``marked subgraph'' for $0\leq j\leq 2m$. Thus such $2m+1$ vertices are not contained in ``marked subgraph'', and ``marked subgraph'' is a plane graph. The ``marked subgraph'' formed by all paths of length two whose initial and terminal are in $X'_{1}\cup X'_{3}\cup \cdots \cup X'_{2n-3}$ is a $2\times 2$-polyomino corresponding to a $(n-2)\times (m-1)$ chessboard, and the ``marked subgraph'' formed by all paths of length two whose initial and terminal are in $X'_{2n-3}\cup X^*$ is a $2\times 2$-polyomino corresponding to some $1\times t$ $(0\leq t\leq m-1)$ chessboard attaching a path. Thus, ``marked subgraph'' is a $2\times 2$-polyomino attaching a path.

Suppose to the contrary that $C$ is an $M$-alternating cycle contained in ``marked subgraph''. Then $\text{Int}[C]$ (the subgraph of $T(2n,2m+1,r)$ induced by the vertices of $C$ and its interior) is a $2\times 2$-polyomino. By Lemma \ref{polyomino}, $\text{Int}[C]$ has an odd number of interior vertices, which contradicts that $C$ is $M$-alternating. Thus, ``marked subgraph'' contains no $M$-alternating cycles.
\end{proof}

\section{\normalsize  The maximum forcing number of $T(2n,2m,r)$  for $1\leq r\leq 2m$}
In this section, we are to obtain the maximum forcing number of $T(2n,2m,r)$ for $1\leq r\leq 2m$.

In the proof of Theorem \ref{odd}, we fix $mn$ marked vertices to prove that ``marked subgraph'' contains no $M$-alternating cycles for any perfect matching $M$ of  $T(2n,2m+1,r)$, where $1\leq r\leq 2m+1$. But for a perfect matching $M$ of $T(2n,2m,r)$, ``marked subgraph'' contains an $M$-alternating cycle no matter which sets with size $mn$ we mark. For the case that each $\mathrm{II}$-cycle is not $M$-alternating, we can prove the following result.
\begin{lem}\label{modifiedcycle}For $n,m\geq 2$ and $1\leq r\leq 2m$, assume that $M$ is a perfect matching of $T(2n,2m,r)$ and each $\mathrm{II}$-cycle is not $M$-alternating.  Then we can  mark  $mn$ vertices so that ``marked subgraph'' contains no $M$-alternating cycles.
\end{lem}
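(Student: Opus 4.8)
The plan is to mimic the marking strategy of Theorem \ref{odd}, but to adapt it to the even-width case by exploiting the hypothesis that no $\mathrm{II}$-cycle (i.e. no row) is $M$-alternating. The obstruction noted in the text is that in $T(2n,2m,r)$ any set of $mn$ marked independent vertices forces the ``marked subgraph'' to wrap around horizontally and create an $M$-alternating cycle using a whole row; the non-alternating hypothesis is precisely what rules this out. So first I would set up the same kind of marked set as before: on each odd row $1,3,\dots,2n-1$, mark $m$ vertices, alternating in the columns, but now shifted so that the $2\times 2$-polyomino pieces generated by consecutive odd rows $X'_{2i-1}, X'_{2i+1}$ are honest finite polyominoes (corresponding to $(n-1)\times (m-1)$-type chessboards) rather than cylinders. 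Because the row has even length $2m$, one can leave exactly one ``gap'' column per odd row so the horizontal chain of length-two paths does not close up; as in Theorem \ref{odd}, the vertices of the even rows $0,2,\dots$ adjacent to these gaps are excluded from the marked subgraph, keeping it planar. The last odd row $2n-1$ needs the usual torsion correction (a set analogous to $X^*$) so that the wrap-around from row $2n-1$ back to row $0$ is also broken.

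The second step is the structural claim: with this marking, the marked subgraph is a $2\times 2$-polyomino (possibly with some pendant paths attached, exactly as in Theorem \ref{odd}), hence planar, and every cycle $C$ inside it bounds a region $\mathrm{Int}[C]$ which is again a $2\times 2$-polyomino. By Lemma \ref{polyomino} such a region has an odd number of interior vertices, so $C$ cannot be $M$-alternating. This is the same endgame as Theorem \ref{odd}, and it disposes of all ``local'' alternating cycles automatically. The only cycles not covered by this argument are the ``global'' ones that use an entire $\mathrm{II}$-cycle (a full row) — but those are exactly the configurations excluded by the hypothesis that no $\mathrm{II}$-cycle is $M$-alternating, so the structural claim plus the hypothesis together rule out every $M$-alternating cycle in the marked subgraph.

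Then I would conclude by counting: the marked set $T$ has size $m$ on each of the $n$ odd rows, so $|T| = mn$, and $T$ is independent by construction (marked vertices lie only on odd rows, and within a row the chosen columns are non-adjacent). Applying Lemma \ref{forcingset} with this $T$ gives $f(T(2n,2m,r),M) \le |M|-|T| = 2mn - mn = mn$, which is the bound the lemma asserts; but actually the lemma only asks us to exhibit the marking, so it suffices to have verified that the marked subgraph contains no $M$-alternating cycle.

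The main obstacle I anticipate is making the ``gap-column'' construction work uniformly across all torsions $r$. For odd width $2m+1$ in Theorem \ref{odd} the column $v_{i,0}$ was naturally available as a gap in every row and the torsion only affected the bottom-row set $X^*$; with even width the gap columns on consecutive odd rows must be chosen so that (a) the polyomino pieces between rows $2i-1$ and $2i+1$ glue into a genuine simply-connected polyomino rather than something with a hole, and (b) the gaps are respected after the torsion shift $r$ when passing from row $2n-1$ to row $0$. Handling the various parities of $r$ and of $(r,2m)$ — and in particular checking that the pendant-path part near the bottom row is still a tree and does not secretly close a cycle — is where the case analysis will live; the planarity and the Lemma \ref{polyomino} parity argument themselves are then routine.
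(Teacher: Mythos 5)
Your plan breaks down at its very first step. In a row of even length $2m$ (a $\mathrm{II}$-cycle isomorphic to $C_{2m}$), an independent set of $m$ vertices must consist of every second vertex; there is no way to ``leave exactly one gap column'' while keeping $m$ marked vertices in that row and keeping them pairwise non-adjacent, since two consecutive unmarked columns force, by pigeonhole, two adjacent marked vertices elsewhere in the row. Consequently every marked row closes up: the length-two paths between consecutive marked vertices necessarily sweep out the entire $\mathrm{II}$-cycle, and the marked subgraph can never be reduced to a planar $2\times 2$-polyomino as in Theorem \ref{odd}. This is precisely the obstruction the paper states just before the lemma, and it is the reason the hypothesis that no $\mathrm{II}$-cycle is $M$-alternating is needed in the first place.

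Even granting that this hypothesis disposes of the cycles that are literally full rows, you misidentify the remaining danger. When the marked vertices on the last and first rows land in different columns (which happens for one parity of $r$ relative to $n$), the marked subgraph consists of $n$ full row-cycles together with $m$ connecting paths of length two between the last and first rows, and the surviving candidates for $M$-alternating cycles are ones that weave back and forth between the first and third rows through these connectors. Such cycles are neither entire $\mathrm{II}$-cycles nor boundaries of $2\times 2$-polyominoes, so neither the hypothesis nor Lemma \ref{polyomino} excludes them. The paper's proof handles them with an idea absent from your proposal: the marking is not fixed but is taken from the $2n$ vertical translates of a base pattern, and a monotonicity argument shows that each downward translation strictly increases the number of horizontal edges that any surviving alternating cycle must place in its bottom row; since that number cannot exceed $2m$, some translate admits no $M$-alternating cycle. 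Without a substitute for this step, your argument has a genuine gap.
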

\begin{proof} First we choose an independent set $T$ of $T(2n,2m,r)$ with size $mn$ as marked vertices. If $n$ is odd, then take $$T=\{Y_{4k+1}|k=0,1,2, \dots, \frac{n-1}{2}\} \bigcup \{X_{4k+3}|k=0,1,2, \dots, \frac{n-3}{2}\}.$$ Otherwise, take $$T=\{Y_{4k+1}|k=0,1,2, \dots, \frac{n-2}{2}\} \bigcup \{X_{4k+3}|k=0,1,2, \dots, \frac{n-2}{2}\}.$$ See two examples in Fig. \ref{em81}.
\begin{figure}[h]
\centering
\includegraphics[height=6cm,width=13cm]{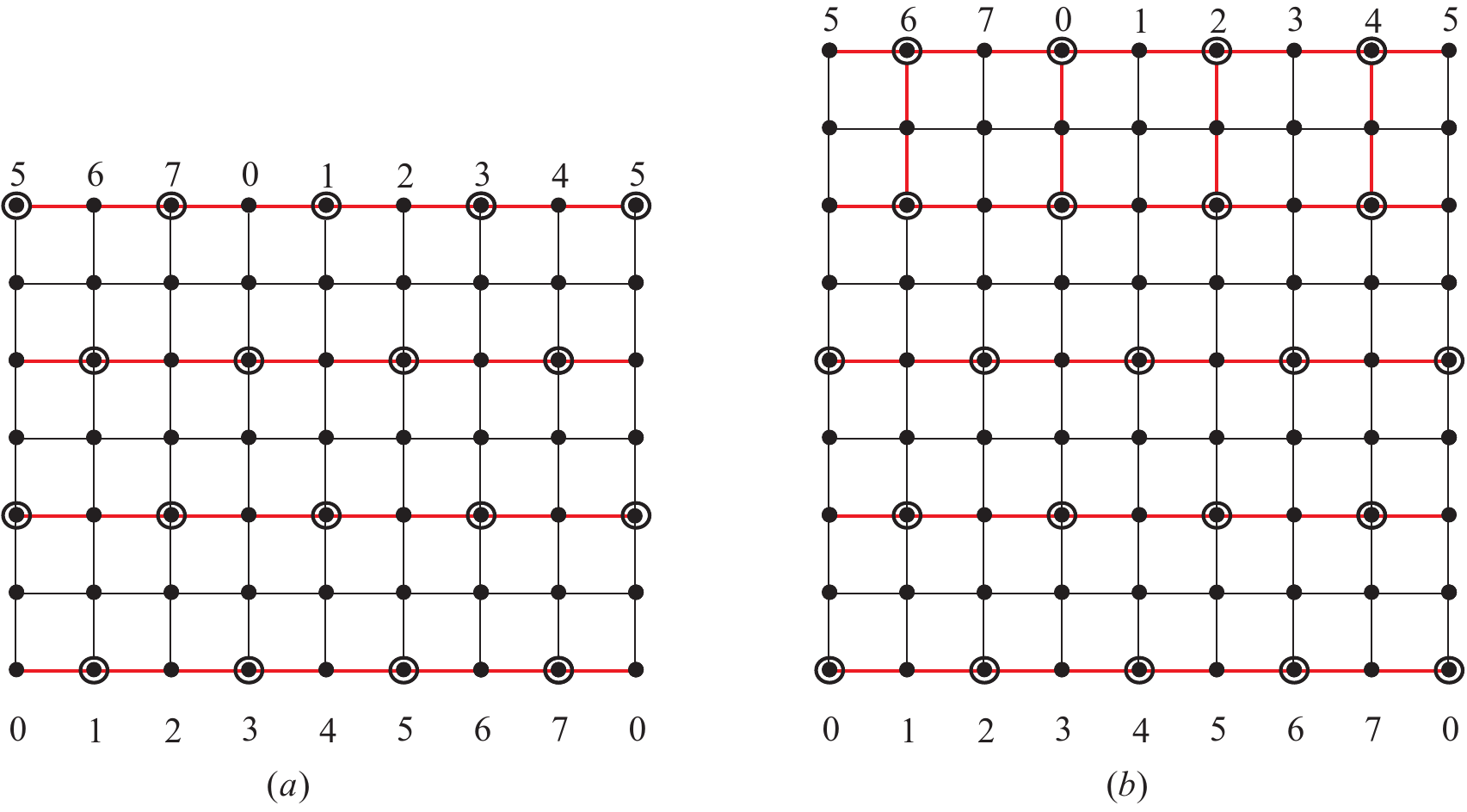}
\caption{\label{em81}Marked vertices and ``marked subgraph'' of $T(6,8,3)$ and $T(8,8,3)$.}
\end{figure}
If $r$ is odd (resp. even), then marked vertices on the first and last rows are located at different (resp. same) columns. For the case that $r$ and $n$ have the same parity, ``marked subgraph'' consists of $n$ $\mathrm{II}$-cycles. By the assumption, each $\mathrm{II}$-cycle is not $M$-alternating. Thus, ``marked subgraph'' contains no $M$-alternating cycles, and $T$ is the marked vertices we require. It suffices to consider the case that $r$ and $n$ have different parity.

In the sequel, we only prove the lemma for the case that $r$ is odd and $n$ is even, and the proof is similar for the other case. Now marked vertices on the first and third rows are located at the same columns. Thus ``marked subgraph'' consists of $m$ paths of length two $\{v_{2n-1,2m-r+j}v_{0,j}v_{1,j}|j=1,3,\dots,2m-1\}$ and $n$ $\mathrm{II}$-cycles shown as red lines in Fig. \ref{em81}(b).

By the assumption, each $\mathrm{II}$-cycle is not $M$-alternating. Hence, each $M$-alternating cycle (if exists) of ``marked subgraph'' is contained in the subgraph induced by all vertices of the first three rows, and contains at least two vertices on the second row. By Lemma \ref{polyomino}, an $M$-alternating cycle cannot form the boundary of a $2\times 2$-polyomino which corresponds to a $1\times l$ chessboard for $1\leq l\leq m-1$. Therefore, any $M$-alternating cycle of ``marked subgraph'' has the following form: it starts with a $\mathrm{II}$-cycle in the first row and moves to the third row and backs at specified intervals shown as green lines in Fig. \ref{emmm}(a).
Notice that each such cycle contains exactly $2m$ horizontal edges, divided in some way between the two rows.
\begin{figure}[h]
\centering
\includegraphics[height=2.6cm,width=17cm]{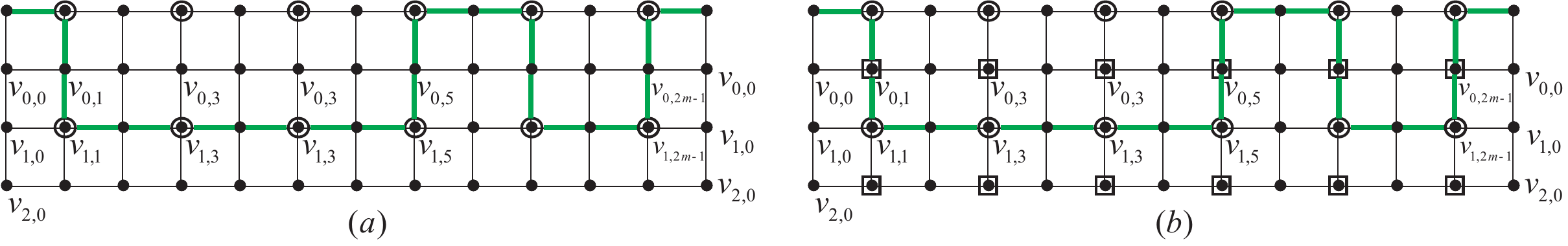}
\caption{\label{emmm}$M$-alternating cycle of ``marked subgraph''.}
\end{figure}

Translating the marked vertices down by one row shown as Fig. \ref{emmm}(b), we also have an $M$-alternating cycle lying on the subgraph induced by the vertices of the second, third and fourth rows (otherwise, new marked vertices we obtained is what we want). We will demonstrate that the new $M$-alternating cycle has more horizontal edges in the bottom (i.e., the fourth) row than the first one does. Consider the set of horizontal edges in the bottom row of the first $M$-alternating cycle, which is partitioned into subsets naturally by proximity: there is a set of horizontal edges, then a cross-over, then perhaps a cross-back, then another set of horizontal edges, and so forth. Consider one of these sets, say  $\{v_{1,1}v_{1,2},v_{1,2}v_{1,3},\cdots, v_{1,2t}v_{1,2t+1}\}$ shown as green lines on the third row of Fig. \ref{emm8}(a), where $t\geq 1$. By the form of $M$-alternating cycles, edges of $\{v_{1,1}v_{0,1},v_{0,1}v_{2n-1,2m-r+1}\}$ and $\{v_{1,2t+1}v_{0,2t+1},v_{0,2t+1}v_{2n-1,2m-r+2t+1}\}$ are contained in the first $M$-alternating cycle. It suffices to prove that the set of edges $$\{v_{2,0}v_{2,1},v_{2,1}v_{2,2},v_{2,2}v_{2,3},\cdots, v_{2,2t}v_{2,2t+1}\} \text{ or } \{v_{2,1}v_{2,2},v_{2,2}v_{2,3},\cdots, v_{2,2t}v_{2,2t+1},v_{2,2t+1}v_{2,2t+2}\}$$ is contained in the bottom row of the new $M$-alternating cycle.
 \begin{figure}[h]
\centering
\includegraphics[height=2.6cm,width=17cm]{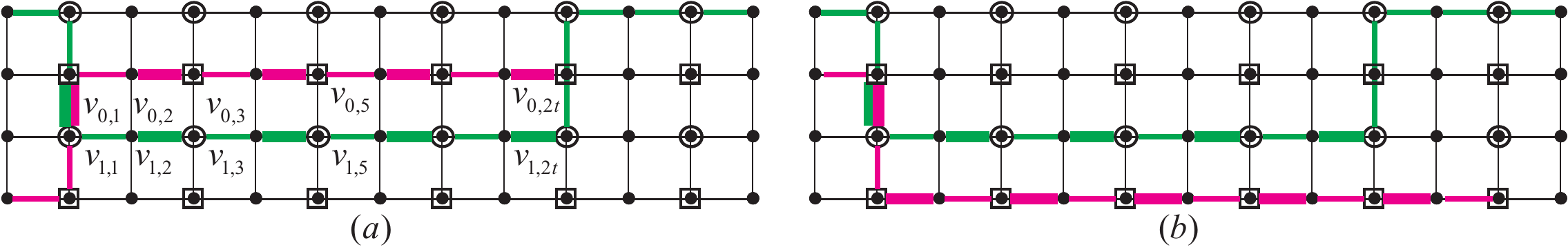}
\caption{\label{emm8}Part of the two $M$-alternating cycles lying in corresponding ``marked subgraphs''.}
\end{figure}

Since all horizontal edges of the first $M$-alternating cycle lie on the first and third rows, and these of the new $M$-alternating cycle lie on the second and fourth rows, only  vertical edges in $\{v_{0,2k+1}v_{1,2k+1}|k=0,1,\dots, m-1\}$ may be intersected. If $v_{0,1}v_{1,1}$ belongs to the new $M$-alternating cycle, then $v_{0,1}v_{1,1}\in M$, and $v_{1,1}v_{2,1}$ is contained in the new $M$-alternating cycle.  We claim that $v_{0,0}v_{0,1}$ is contained in the new $M$-alternating cycle. Otherwise, $v_{0,1}v_{0,2}$ and $v_{0,2}v_{0,3}\in M$ are contained in the new $M$-alternating cycle. Since $v_{1,2}v_{1,3}\in M$, $v_{0,3}v_{1,3}$ does not lie on the new $M$-alternating cycle. Hence the path $v_{0,1}v_{0,2}v_{0,3}\cdots v_{0,2t}v_{0,2t+1}$ lies on the new $M$-alternating cycle (see Fig. \ref{emm8}(a)). Note that $v_{0,2t}v_{0,2t+1}\in M$, which contradicts that $v_{2n-1,2m-r+2t+1}v_{0,2t+1}$ and $v_{0,2t+1}v_{1,2t+1}$ belong to the first $M$-alternating cycle. Now we prove the claim. Thus, $v_{0,0}v_{0,1}$ and $v_{1,1}v_{2,1}$ lie on the new $M$-alternating cycle (see Fig. \ref{emm8}(b)). Since $v_{1,1}v_{1,2}v_{1,3}\cdots v_{1,2t}v_{1,2t+1}$ is on the first $M$-alternating cycle, we can obtain that the path $v_{2,1}v_{2,2}v_{2,3}\cdots v_{2,2t}v_{2,2t+1}v_{2,2t+2}$ lies on the second $M$-alternating cycle by a simple argument. If $v_{0,2t+1}v_{1,2t+1}$ belongs to the new $M$-alternating cycle, then, by a similar argument, we can obtain that $$v_{0,2t+2}v_{0,2t+1}v_{1,2t+1}v_{2,2t+1}v_{2,2t}\cdots v_{2,2}v_{2,1}v_{2,0}$$ lies on the second $M$-alternating cycle. If neither $v_{0,1}v_{1,1}$ nor $v_{0,2t+1}v_{1,2t+1}$ belongs to the new $M$-alternating cycle (see Fig. \ref{emm82222}), then, by the form of $M$-alternating cycles, such two $M$-alternating cycles have no common edges in this area, and the result holds naturally.
This means that all horizontal edges in the bottom row of the first $M$-alternating cycle give rise to abutting horizontal edges in the bottom row of the second one. Because the intersected vertical edges cannot overlap, there is at least one more horizontal edge in the bottom row of the second $M$-alternating cycle.
 \begin{figure}[h]
\centering
\includegraphics[height=2cm,width=8cm]{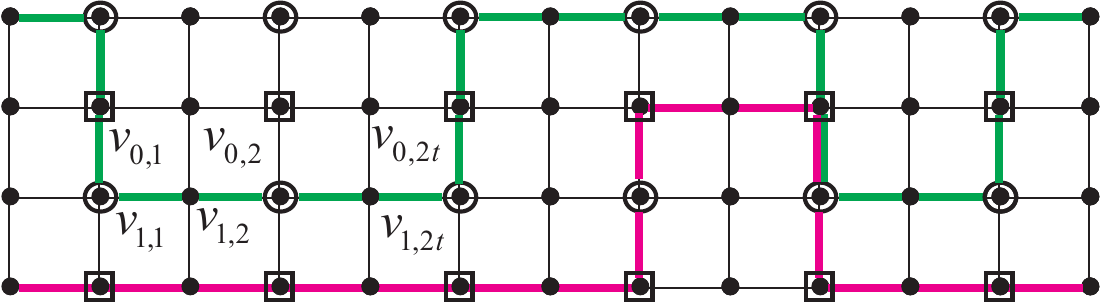}
\caption{\label{emm82222}Part of the two $M$-alternating cycles lying in corresponding ``marked subgraphs''.}
\end{figure}

Each time we translate the marked vertices down by one row, we obtain an abutting $M$-alternating cycle which contains more  horizontal edges in the bottom row than the first one does. Since any $M$-alternating cycle contains no more than $2m$ horizontal edges on its bottom row, there is a placement of marked vertices such that ``marked subgraph'' contains no $M$-alternating cycles.
\end{proof}

\subsection{\small The maximum forcing number of $T(2n,2m,2r)$ for $1\leq r\leq m$}

By Lemma \ref{lem1}, $T(n,m,r)$ contains $(r,m)$ $\mathrm{I}$-cycles, and each $\mathrm{I}$-cycle contains $\frac{mn}{(r,m)}$ vertices. For $(r,m)\geq 2$ and $j\in Z_{(r,m)}$, the subgraph induced by all vertices of the two $\mathrm{I}$-cycles containing $j$-column and $(j+1)$-column contains a subgraph isomorphic to $C_{\frac{mn}{(r,m)}}\square P_2$, denoted by $C_{j,j+1}$. Particularly, $C_{j,j+1}$ is isomorphic to $C_{\frac{mn}{(r,m)}}\square P_2$ for $(r,m)\geq 3$ where $j\in Z_{(r,m)}$.

\begin{thm}\label{mqps1}For $n,m\geq 2$ and $1\leq r\leq m$, we have
\begin{equation*}
 F(T(2n,2m,2r))=
 \begin{cases}
 mn+1, & \quad {if\  (r,m)=1};\\
 mn,&\quad {otherwise}.
 \end{cases}
 \end{equation*}
\end{thm}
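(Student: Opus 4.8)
I would prove the matching lower and upper bounds separately.

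\emph{Lower bound.} To get $F(T(2n,2m,2r))\ge mn$ in all cases, take the perfect matching $M_0=W_0\cup W_1\cup\cdots\cup W_{2m-1}$; its restriction to each of the $n$ pairwise vertex-disjoint subgraphs $R_{0,1},R_{2,3},\dots,R_{2n-2,2n-1}$, each isomorphic to $C_{2m}\square P_2$, is the ``rung'' perfect matching, whose forcing number is $m$, and since forcing sets restrict and the edge sets $E(R_{2i,2i+1})$ are disjoint, $f(T,M_0)\ge mn$. For the extra unit when $(r,m)=1$, I would instead use $M^{*}=\{v_{i,2k}v_{i,2k+1}\mid i\in Z_{2n},\ k\in Z_m\}$, all ``even-offset'' horizontal edges: every $\mathrm{II}$-cycle is $M^{*}$-alternating, so a forcing set must contain a matching edge in each of the $2n$ rows, while for all $i,k$ the $4$-cycle $v_{i,2k}v_{i,2k+1}v_{i+1,2k+1}v_{i+1,2k}$ (and its wrap-around analogue between rows $2n-1$ and $0$, which involves the torsion $2r$) is $M^{*}$-alternating. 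Indexing the $2mn$ matching edges by $Z_{2n}\times Z_m$, the pairs of matching edges sharing such a $4$-cycle form a graph that, by Lemma \ref{lem1} and $(r,m)=1$, is a single cycle of length $2mn$ meeting all $2n$ rows; its only two minimum (size-$mn$) vertex covers consist of all grid points in rows of one fixed parity, so a size-$mn$ forcing set cannot also meet every row, giving $f(T,M^{*})\ge mn+1$. (When $(r,m)\ge 2$ this graph splits into $(r,m)$ cycles and a size-$mn$ vertex cover meeting every row does exist, so no improvement occurs.)

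\emph{Upper bound.} Let $M$ be any perfect matching. If no $\mathrm{II}$-cycle is $M$-alternating, Lemma \ref{modifiedcycle} supplies $mn$ marked vertices with no $M$-alternating cycle in the marked subgraph, so Lemma \ref{forcingset} gives $f(T,M)\le 2mn-mn=mn$. Otherwise some row $a$ is $M$-alternating; then each $v_{a,j}$ is matched inside row $a$, so the two vertical edges at $v_{a,j}$ lie off $M$, whence every $\mathrm{I}$-cycle has two consecutive non-$M$ edges and no $\mathrm{I}$-cycle is $M$-alternating. Passing to the dual drawing, by Lemma \ref{drawing} $T^{*}=T\!\bigl(2(r,m),\tfrac{2mn}{(r,m)},\,2n(\tfrac{m}{(r,m)}-k)\bigr)$ is again an even-by-even quadriculated torus in which the $\mathrm{I}$-cycles of $T$ are the $\mathrm{II}$-cycles. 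When $(r,m)\ge 2$ both parameters $(r,m)$ and $\tfrac{mn}{(r,m)}$ are $\ge 2$ and no $\mathrm{II}$-cycle of $T^{*}$ is $M$-alternating, so Lemma \ref{modifiedcycle} on $T^{*}$ gives $(r,m)\cdot\tfrac{mn}{(r,m)}=mn$ marked vertices, hence $f(T,M)=f(T^{*},M)\le mn$; with the lower bound this proves $F=mn$ when $(r,m)\ge 2$.

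\emph{The hard case.} When $(r,m)=1$ and some $\mathrm{II}$-cycle is $M$-alternating, $T^{*}=T(2,2mn,\cdot)$ is only two rows wide and Lemma \ref{modifiedcycle} does not apply. I would instead run the marked-subgraph/sliding argument of Lemma \ref{modifiedcycle} directly on the two long $\mathrm{I}$-cycles of $T$: mark vertices along them, translate the marked set one step at a time, and use Lemma \ref{polyomino} to rule out $2\times 2$-polyomino boundaries; the wrap-around of the two $\mathrm{I}$-cycles forces marking only $mn-1$ vertices, so $f(T,M)\le 2mn-(mn-1)=mn+1$. I expect this thin-torus analysis to be the main obstacle: the clean reduction to a genuine prism available for $(r,m)\ge 2$ disappears, the polyomino and sliding estimates must be carried out by hand, and the residual ``$+1$'' is precisely the topological defect that $M^{*}$ exploits in the lower bound.
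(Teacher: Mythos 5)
Your lower bounds and your upper bound for $(r,m)\ge 2$ are essentially correct. The reduction for $(r,m)\ge 2$ (no $\mathrm{II}$-cycle alternating $\Rightarrow$ Lemma \ref{modifiedcycle}; some $\mathrm{II}$-cycle alternating $\Rightarrow$ no $\mathrm{I}$-cycle alternating $\Rightarrow$ apply Lemma \ref{modifiedcycle} to $T^*$) is exactly the paper's route. For the extremal matching when $(r,m)=1$ the paper uses the same $M^*=E_0\cup E_2\cup\cdots\cup E_{2m-2}$, but argues by exhibiting one candidate set of $mn$ edges and observing it misses $n$ rows; your conflict-cycle/vertex-cover formulation (a single $2mn$-cycle whose only two minimum vertex covers lie in rows of one parity) is a cleaner and more complete justification of that step, and is a genuine improvement in rigor.

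The gap is in the hard case: $(r,m)=1$ with some $\mathrm{II}$-cycle $M$-alternating. Your proposal to ``run the sliding argument of Lemma \ref{modifiedcycle} on the two long $\mathrm{I}$-cycles'' and assert that ``the wrap-around forces marking only $mn-1$ vertices'' is not an argument: you neither specify the marked set, nor explain why translation terminates, nor why $mn-1$ (rather than $mn$ or fewer) is achievable, and the sliding mechanism of Lemma \ref{modifiedcycle} is built on the hypothesis that no $\mathrm{II}$-cycle is $M$-alternating, which fails here by assumption. The paper does something different and static: assuming (WLOG) that row $2n-1$ is $M$-alternating with $\{v_{2n-1,2j}v_{2n-1,2j+1}\mid j\in Z_m\}\subseteq M$, it marks the explicit independent set $Y_*\cup X_*\cup X_2'\cup\cdots\cup X_{2n-2}'$ of size $mn-1$, shows the marked subgraph is a $2\times 2$-polyomino (no alternating cycles by Lemma \ref{polyomino}) together with a single $4$-cycle $v_{2n-1,0}v_{2n-1,2m-1}v_{0,2r-1}v_{0,2r}v_{2n-1,0}$ carrying two pendant paths, and kills that $4$-cycle directly because $v_{2n-1,0}v_{2n-1,1}\in M$. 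You would need to supply a construction of this kind; as written, the case that actually produces the ``$+1$'' in the theorem is not proved.
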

\begin{proof}First we prove the case that $(r,m)\neq 1$. Let $M_1=E_0\cup E_2\cup \dots \cup E_{2m-2}$ be a perfect matching of $T(2n,2m,2r)$ shown as Fig. \ref{em1}(a), where $E_j=\{v_{i,j}v_{i,j+1}|i\in Z_{2n}\}$. Then $C_{2j,2j+1}$ contains a subgraph isomorphic to $C_{\frac{2mn}{(r,m)}}\square P_2$ for $j\in Z_{(r,m)}$ and contains $\frac{mn}{(r,m)}$ disjoint $M_1$-alternating cycles. Hence, $T(2n,2m,2r)$ contains $mn$ disjoint $M_1$-alternating cycles and $f(T(2n,2m,2r),M_1)\geq mn$. Form a forcing set of size $mn$ so that half horizontal edges of $C_{2j,2j+1}$ are chosen for $j\in Z_{(r,m)}$. Precisely, from top to bottom we choose 1'th, 3'th, $\dots$, $(\frac{2mn}{(r,m)}-1)'$th horizontal edges of $C_{4j,4j+1}$ for $j\in \lceil\frac{(r,m)}{2}\rceil$ and 2'th, 4'th, $\dots$, $\frac{2mn}{(r,m)}$'th horizontal edges of $C_{4j+2,4j+3}$ for $j\in \lfloor\frac{(r,m)}{2}\rfloor$ (red lines of $T^*(2n,2m,2r)$ in Fig. \ref{em1}(b) and that of $T(2n,2m,2r)$ in Fig. \ref{em1}(c) form a forcing set). Hence, $f(T(2n,2m,2r),M_1)= mn$. 
\begin{figure}[h]
\centering
\includegraphics[height=5.5cm,width=14cm]{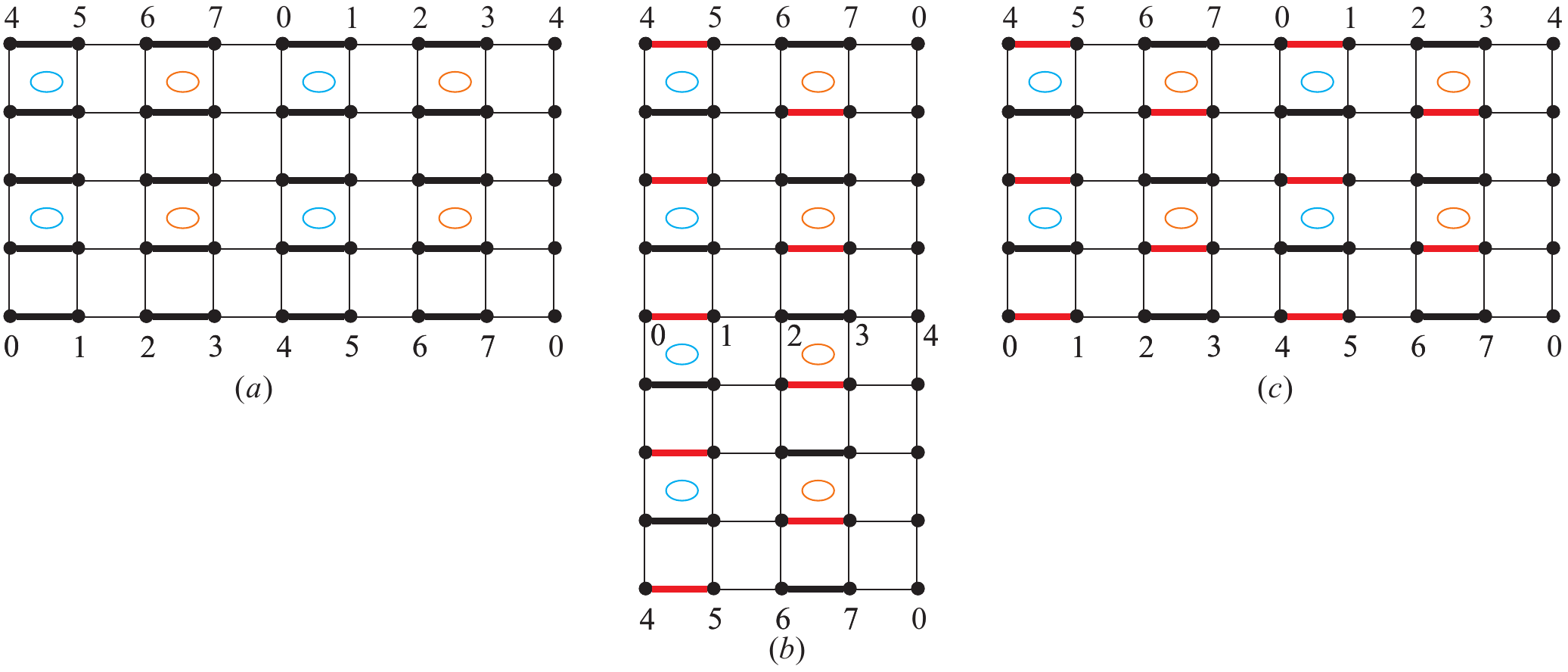}
\caption{\label{em1}The perfect matching $M_1$ of $T(4,8,4)$, where red lines form a forcing set of $M_1$.}
\end{figure}

Let $M$ be any perfect matching of $T(2n,2m,2r)$. It suffices to prove that $$f(T(2n,2m,2r),M)\leq mn.$$
If none of $\mathrm{II}$-cycles is $M$-alternating, then we can mark $mn$ vertices so that ``marked subgraph'' contains no $M$-alternating cycles by Lemma \ref{modifiedcycle}. Otherwise, there is an $M$-alternating $\mathrm{II}$-cycle. Then each $\mathrm{I}$-cycle is not $M$-alternating.
By Lemma \ref{drawing}, $T(2n,2m,2r)$ has another representation $$T^*(2n,2m,2r)=T(2(r,m), \frac{2nm}{(r,m)},2n(\frac{m}{(r,m)}-k)),$$ in which each $\mathrm{II}$-cycle is not $M$-alternating.
By Lemma \ref{modifiedcycle}, we can mark $mn$ vertices so that ``marked subgraph'' contains no $M$-alternating cycles.
By Lemma \ref{forcingset}, $$f(T(2n,2m,2r),M)=f(T^*(2n,2m,2r),M)\leq |M|-|T|=mn.$$ By the arbitrariness of $M$, we have $F(T(2n,2m,2r))\leq mn$.

Next we prove the case that $(r,m)= 1$. By Lemma \ref{lem1}, $T(2n,2m,2r)$ has exactly two $\mathrm{I}$-cycles. Let $M_1=E_0\cup E_2\cup \dots \cup E_{2m-2}$ be a perfect matching of $T(2n,2m,2r)$ shown as bold lines in Fig. \ref{em12}(a).
\begin{figure}[h]
\centering
\includegraphics[height=3.5cm,width=14cm]{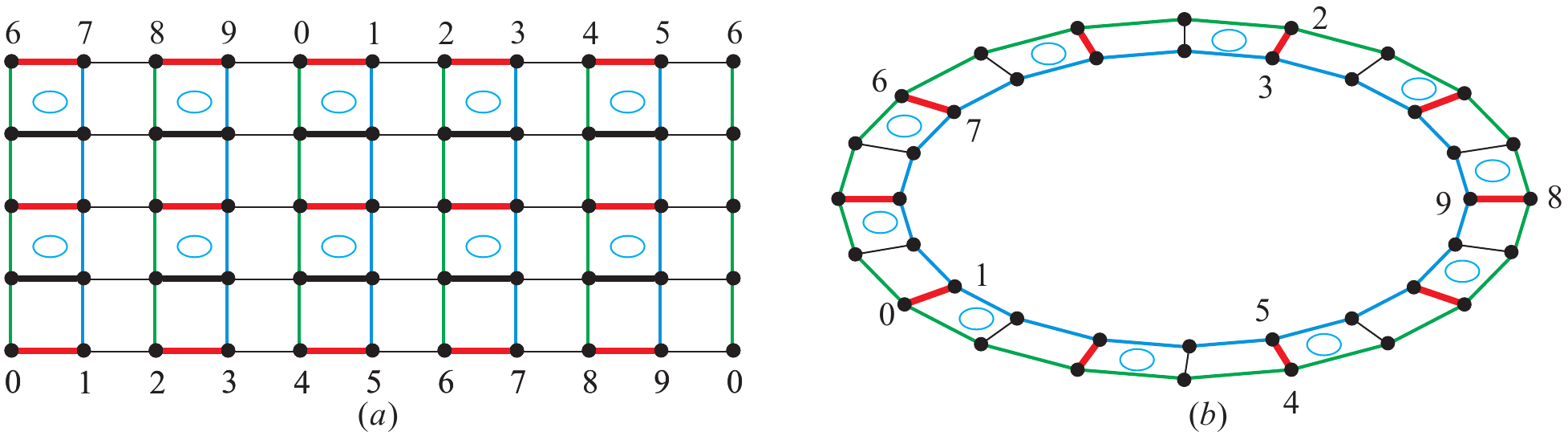}
\caption{\label{em12}The perfect matching $M_1$ of $T(4,10,4)$, and red lines cannot form a forcing set of $M_1$.}
\end{figure}
Since $C_{0,1}$ contains a subgraph isomorphic to $C_{2nm}\square P_2$, $T(2n,2m,2r)$ contains $mn$ disjoint $M_1$-alternating cycles. Since a forcing set of $M_1$ contains at least one edge from each $M_1$-alternating cycle, any forcing set of $M_1$ has size at least $mn$. To find a forcing set of size $mn$, we need to choose one of the horizontal edges in any two consecutive ones of $C_{0,1}$. In $C_{0,1}$, starting with the two consecutive edges $v_{0,0}v_{0,1}$ and $v_{1,0}v_{1,1}$, in which the latter are chosen, we choose a set of horizontal edges with size $mn$ shown as red lines in Fig. \ref{em12}(b), where each $E_{2j}$ for $j\in Z_{m}$ has $n$ edges $\{v_{2i+1,2j}v_{2i+1,2j+1}|i\in Z_n\}$ being chosen.
But the chosen $mn$ edges cannot form a forcing set of $M_1$ for there are still $n$ $\mathrm{II}$-cycles being not intersected with such $mn$ edges (see red lines in Fig. \ref{em12}(a)). Hence, $f(T(2n,2m,2r),M_1)\geq mn+1$. It's easy to find a forcing set of size $mn+1$. Thus $f(T(2n,2m,2r),M_1)=mn+1$.

For any perfect matching $M$ of $T(2n,2m,2r)$, we are to prove that $$f(T(2n,2m,2r),M)\leq mn+1.$$ By Lemma \ref{forcingset}, it suffices to prove that we can mark at least $mn-1$ vertices in $T(2n,2m,2r)$ such that ``marked subgraph'' contains no $M$-alternating cycles. If each $\mathrm{II}$-cycle is not $M$-alternating, then we can mark $mn$ vertices so that ``marked subgraph'' contains no $M$-alternating cycles by Lemma \ref{modifiedcycle}. Otherwise, assume that $v_{2n-1,0}v_{2n-1,1}\cdots v_{2n-1,2m-1}v_{2n-1,0}$ is an $M$-alternating cycle, and $\{v_{2n-1,2j}v_{2n-1,2j+1}|j\in Z_{m}\}\subseteq M$. Let $$X_*=\{v_{0,1},v_{0,3},\dots,v_{0,2r-1},v_{0,2r+3},v_{0,2r+5},\dots,v_{0,2m-1}\} \text{ and } Y_*=\{v_{3,0},v_{5,0},\dots,v_{2n-1,0}\}.$$ Take $T=Y_*\cup X_*\cup X'_2\cup X'_4\cup \dots \cup X'_{2n-2}$ as marked vertices shown as Fig. \ref{em122}, where $X'_{i}=X_{i}-\{v_{i,0}\}$ for $i\in Z_{2n}$. Then all vertices on the third row don't lie on the ``marked subgraph'', and ``marked subgraph'' is a plane graph shown as red lines in Fig. \ref{em122}.
\begin{figure}[h]
\centering
\includegraphics[height=5.5cm,width=12.5cm]{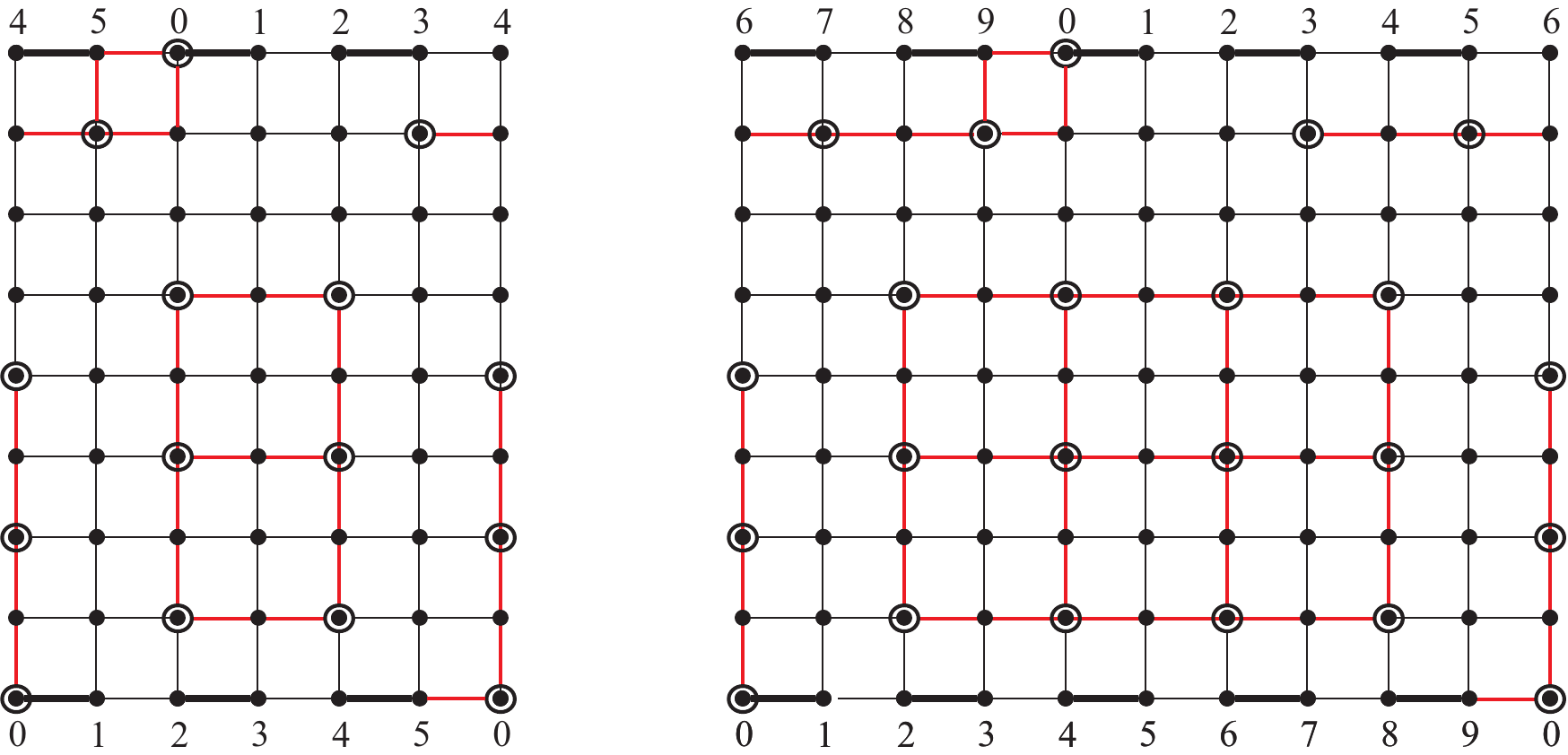}
\caption{\label{em122}Marked vertices and ``marked subgraph'' of $T(8,6,2)$ and $T(8,10,4)$.}
\end{figure}

The ``marked subgraph'' formed by all paths of length two whose initial and terminal are in $X'_2\cup X'_4 \cup \cdots \cup X'_{2n-2}$ is a $2\times 2$-polyomino corresponding to
a $(n-2)\times (m-2)$ chessboard. Noting that both $v_{2n-1,0}$ and $v_{0,2r-1}$ are marked vertices, $v_{2n-1,0}v_{2n-1,2m-1}v_{0,2r-1}v_{0,2r}v_{2n-1,0}$ is contained in ``marked subgraph'', and the ``marked subgraph'' formed by all paths of length two whose initial and terminal are in $X_*\cup Y_*$ is a cycle of length 4 attaching a path on $2m-2$ vertices and a path on $2n-3$ vertices. Furthermore, ``marked subgraph'' consists of a $2\times 2$-polyomino corresponding to a $(n-2)\times (m-2)$ chessboard and a 4-cycle attaching a path on $2m-2$ vertices and a path on $2n-3$ vertices.
Since $v_{2n-1,0}v_{2n-1,1}\in M$, such 4-cycle $v_{2n-1,0}v_{2n-1,2m-1}v_{0,2r-1}v_{0,2r}v_{2n-1,0}$ is not $M$-alternating. By Lemma \ref{polyomino}, a $2\times 2$-polyomino contains no $M$-alternating cycles. Thus, ``marked subgraph'' contains no $M$-alternating cycles.

By Lemma \ref{forcingset},  $M\setminus E_{T}$ is a forcing set of $M$ and $$f(T(2n,2m,2r),M)\leq |M|-|T|\leq 2mn-(mn-1)=mn+1.$$ By the arbitrariness of $M$, we have $F(T(2n,2m,2r))\leq nm+1$.
\end{proof}

\subsection{\small The maximum forcing number of $T(2n,2m,2r-1)$ for $1\leq r\leq m$}

Next we will obtain the maximum forcing number of $T(2n,2m,2r-1)$ for $1\leq r\leq m$.

\begin{thm}\label{even}For $n\geq1$, $m\geq 2$ and $1\leq r\leq m$, $F(T(2n,2m,2r-1))=mn$.
\end{thm}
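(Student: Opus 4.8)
The plan is to follow the same two-sided scheme used throughout Sections 3 and 4: exhibit one perfect matching whose forcing number equals $mn$ to get the lower bound $F(T(2n,2m,2r-1))\ge mn$, and then show that \emph{every} perfect matching $M$ admits a set of $mn$ marked vertices whose ``marked subgraph'' contains no $M$-alternating cycle, so that Lemma \ref{forcingset} gives $f(T(2n,2m,2r-1),M)\le 2mn-mn=mn$. For the lower bound I would take the ``brick-wall'' matching $M_1=E_0\cup E_2\cup\cdots\cup E_{2m-2}$ (all horizontal edges in even columns), exactly as in Theorem \ref{mqps1}. Since $2r-1$ is odd, by Lemma \ref{lem1} the torsion $2r-1$ is coprime-controlled: $(2r-1,2m)=(2r-1,m)$ is odd, and in particular the number of $\mathrm{I}$-cycles is odd. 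The key point is that each $R_{2i,2i+1}\cong C_{2m}\square P_2$ forces any forcing set of $M_1\cap E(R_{2i,2i+1})$ to have size at least $m$, giving $f\ge mn$; and one can choose $m$ edges per double-row band (alternating the choice of ``odd'' vs.\ ``even'' rung between consecutive bands so as to kill the $\mathrm{II}$-cycles as well) to realize a forcing set of size exactly $mn$. The parity being odd here is what makes $mn$ (rather than $mn+1$) achievable, in contrast with the $(r,m)=1$ subcase of Theorem \ref{mqps1}.

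For the upper bound I would split on whether some $\mathrm{II}$-cycle is $M$-alternating. If no $\mathrm{II}$-cycle is $M$-alternating, Lemma \ref{modifiedcycle} applies directly (note $T(2n,2m,2r-1)$ satisfies its hypotheses: $n,m\ge 2$, $1\le 2r-1\le 2m$), so we can mark $mn$ vertices with no $M$-alternating cycle in the marked subgraph, and we are done. If some $\mathrm{II}$-cycle \emph{is} $M$-alternating, then no $\mathrm{I}$-cycle can be $M$-alternating (a standard parity/disjointness argument: an $M$-alternating $\mathrm{II}$-cycle fixes the matching on all vertical edges incident to its row, which propagates and prevents any $\mathrm{I}$-cycle from alternating). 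Now pass to the transpose representation $T^*(2n,2m,2r-1)$, which by Lemma \ref{drawing} is again a quadriculated torus $T((2r-1,2m),\frac{2nm}{(2r-1,2m)},\ast)$ in which the roles of $\mathrm{I}$- and $\mathrm{II}$-cycles are swapped, so in $T^*$ \emph{no} $\mathrm{II}$-cycle is $M$-alternating; apply Lemma \ref{modifiedcycle} there and translate back using $f(T(2n,2m,2r-1),M)=f(T^*(2n,2m,2r-1),M)$.

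The main obstacle I anticipate is verifying that Lemma \ref{modifiedcycle} genuinely applies to the transpose torus: one must check that $T^*(2n,2m,2r-1)$ has the shape $T(2n',2m',r')$ with $n',m'\ge 2$ required by that lemma, i.e.\ that the new ``vertical size'' $2(2r-1,2m)$ and new ``horizontal size'' $\frac{2nm}{(2r-1,2m)}$ are both even and at least $4$, and that the new torsion has the right parity (odd vs.\ even) so that the marked-vertex construction in Lemma \ref{modifiedcycle} makes sense. Since $(2r-1,2m)$ is odd, $2(2r-1,2m)$ is even as needed, and $\frac{2nm}{(2r-1,2m)}$ is even because $2m/(2r-1,2m)$ is an integer times $2$; the degenerate cases (e.g.\ $(2r-1,2m)=2m$, i.e.\ $r$ such that the torus is essentially $C_{2n}\square C_{2m}$, or $(2r-1,2m)=1$) should be handled separately or absorbed, possibly citing Kleinerman's $F(C_{2m}\square C_{2n})=mn$ for the untwisted case. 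A secondary nuisance is the lower-bound bookkeeping: one must confirm that the size-$mn$ edge set chosen in the brick-wall matching really is a forcing set, i.e.\ that after deleting it the remaining graph has a unique perfect matching — this is the same ``alternating-the-rungs'' argument as in Theorem \ref{odd} and Theorem \ref{mqps1}, and the odd torsion is exactly what guarantees the $\mathrm{II}$-cycles get pierced, so no extra edge is needed.
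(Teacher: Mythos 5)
Your lower bound is fine (the $mn$ disjoint $M_1$-alternating $4$-cycles in the bands $R_{2i,2i+1}$ already give $f(T(2n,2m,2r-1),M_1)\ge mn$, which is all that is needed), and your reduction of the upper bound to the case where some $\mathrm{II}$-cycle is $M$-alternating is also correct. The gap is in how you handle that remaining case: the transpose trick does not work for odd torsion. By Lemma \ref{drawing} applied to $T(2n,2m,2r-1)$, the transposed torus is
\[
T^*(2n,2m,2r-1)=T\bigl((2r-1,2m),\tfrac{4mn}{(2r-1,2m)},\ast\bigr),
\]
whose first parameter (the number of rows) is $(2r-1,2m)$, which is \emph{odd} because $2r-1$ is odd. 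Your claimed vertical size $2(2r-1,2m)$ is a miscalculation: the factor $2$ you are importing comes from the identity $(2r,2m)=2(r,m)$, which holds only for even torsion (this is exactly why the transpose argument works in Theorem \ref{mqps1} but not here). In the extreme case $(2r-1,2m)=1$ the transpose is the single-row torus $T(1,4mn,\ast)$. So $T^*$ is not of the form $T(2n',2m',r')$ with $n',m'\ge 2$, and Lemma \ref{modifiedcycle} cannot be invoked on it; the obstacle you flagged as "to be verified" is in fact fatal to this route.

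The paper closes this case by a direct construction instead: assuming $v_{2n-1,0}v_{2n-1,1}\cdots v_{2n-1,2m-1}v_{2n-1,0}$ is $M$-alternating with $\{v_{2n-1,2j}v_{2n-1,2j+1}\mid j\in Z_m\}\subseteq M$, it marks $T=Y_*\cup X'_0\cup X'_2\cup\cdots\cup X'_{2n-2}$ with $Y_*=\{v_{2n-1,2m-2r+1},v_{1,0},v_{3,0},\dots,v_{2n-3,0}\}$, shows via Lemma \ref{polyomino} that the only candidate $M$-alternating cycle in the marked subgraph is one specific $4$-cycle, and, if that $4$-cycle happens to be $M$-alternating, translates the marking right by two columns and repeats; the odd torsion guarantees the horizontal matching pattern cannot persist around the whole torus, so some translate succeeds. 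You would need to supply an argument of this kind (or some other device) to complete the upper bound.
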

\begin{proof}
Let $M_1=W_0\cup W_1\cup \cdots \cup W_{2m-1}$ be a perfect matching of $T(2n,2m,2r-1)$. Since $R_{2i,2i+1}$ contains a subgraph isomorphic to $C_{2m}\square P_2$, it contains $m$ disjoint $M_1$-alternating cycles for $i\in Z_n$. Thus, any forcing set of $M_1$ has size at least $mn$. Clearly, $W^2_0\cup W^1_1\cup W^2_2\cup \cdots \cup W^2_{2m-2}\cup W^1_{2m-1}$ shown as red lines in Fig. \ref{fig11} is a forcing set of $M_1$ with size $mn$. Hence, we obtain that $f(T(2n,2m,2r-1), M_1)=mn$.
\begin{figure}[h]
\centering
\includegraphics[height=4.2cm,width=15cm]{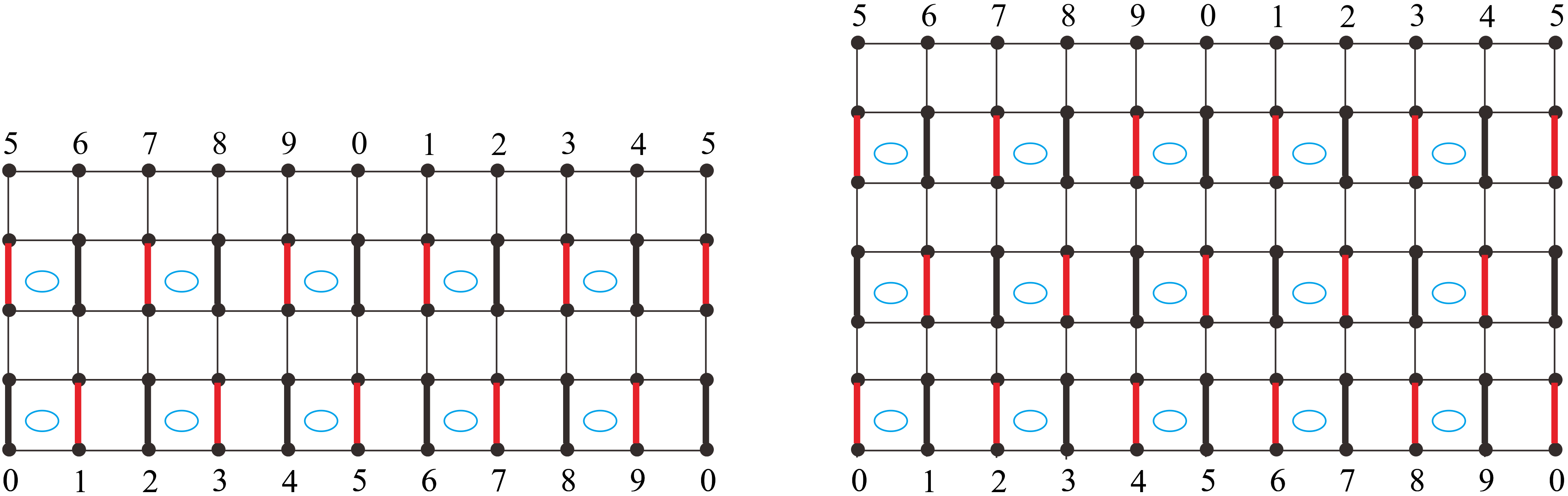}
\caption{\label{fig11}Perfect matchings $M_1$ of $T(4,10,5)$ and $T(6,10,5)$, where red lines form a forcing set.}
\end{figure}

Let $M$ be any perfect matching of $T(2n,2m,2r-1)$, we are to prove that $$f(T(2n,2m,2r-1),M)\leq mn.$$ It suffices to mark $mn$ vertices of $T(2n,2m,2r-1)$ such that ``marked subgraph'' contains no $M$-alternating cycles. If we have done, then by Lemma \ref{forcingset}, we have $$f(T(2n,2m,2r-1),M)\leq |M|-mn=mn.$$
By the arbitrariness of $M$, we have $F(T(2n,2m,2r-1))\leq mn$.

For $n\geq 2$, we only suffice to prove the case that there is a $\mathrm{II}$-cycle is $M$-alternating by Lemma \ref{modifiedcycle}. For $n=1$, $n$ and $2r-1$ are of the same parity, by the proof of Lemma \ref{modifiedcycle}, we also need to prove the same case as $n\geq 2$. Without loss of generality, we suppose that $v_{2n-1,0}v_{2n-1,1}\cdots v_{2n-1,2m-1}v_{2n-1,0}$ is an $M$-alternating $\mathrm{II}$-cycle, and $\{v_{2n-1,2j}v_{2n-1,2j+1}|j\in Z_m\}\subseteq M.$  Let $T=Y_*\cup X'_0 \cup X'_2\cup \cdots \cup X'_{2n-2}$ (see Fig. \ref{mmark2}) as marked vertices, where $$Y_*=\{v_{2n-1,2m-2r+1},v_{1,0}, v_{3,0},\dots, v_{2n-3,0}\} \text{ and } X'_{i}=X_{i}-\{v_{i,0}\} \text{ for } i\in Z_{2n}.$$ Then $T$ is of size $mn$. Since any  vertices of $Y_*$ and that of $X'_{2i}$ belong to no same rows for $i\in Z_{n}$, any vertices of $\{v_{i,1}, v_{i,2m-1}|i\in Z_{2n}\}$ are not contained in ``marked subgraph''. Furthermore, any vertices of $\{v_{2n-1,2m-2r+1+j}|j=2,3,\dots,2m-2\}$ are not contained in ``marked subgraph''. Thus, ``marked subgraph'' is a plane graph shown as red lines in Fig. \ref{mmark2}.
The ``marked subgraph'' formed by all paths of length two whose initial and terminal are in $X'_0\cup X'_2\cup X'_4 \cup \cdots \cup X'_{2n-2}$ is a $2\times 2$-polyomino corresponding to
a $(n-1)\times (m-2)$ chessboard, which contains no $M$-alternating cycles by Lemma \ref{polyomino}.
\begin{figure}[h]
\centering
\includegraphics[height=4.6cm,width=13.5cm]{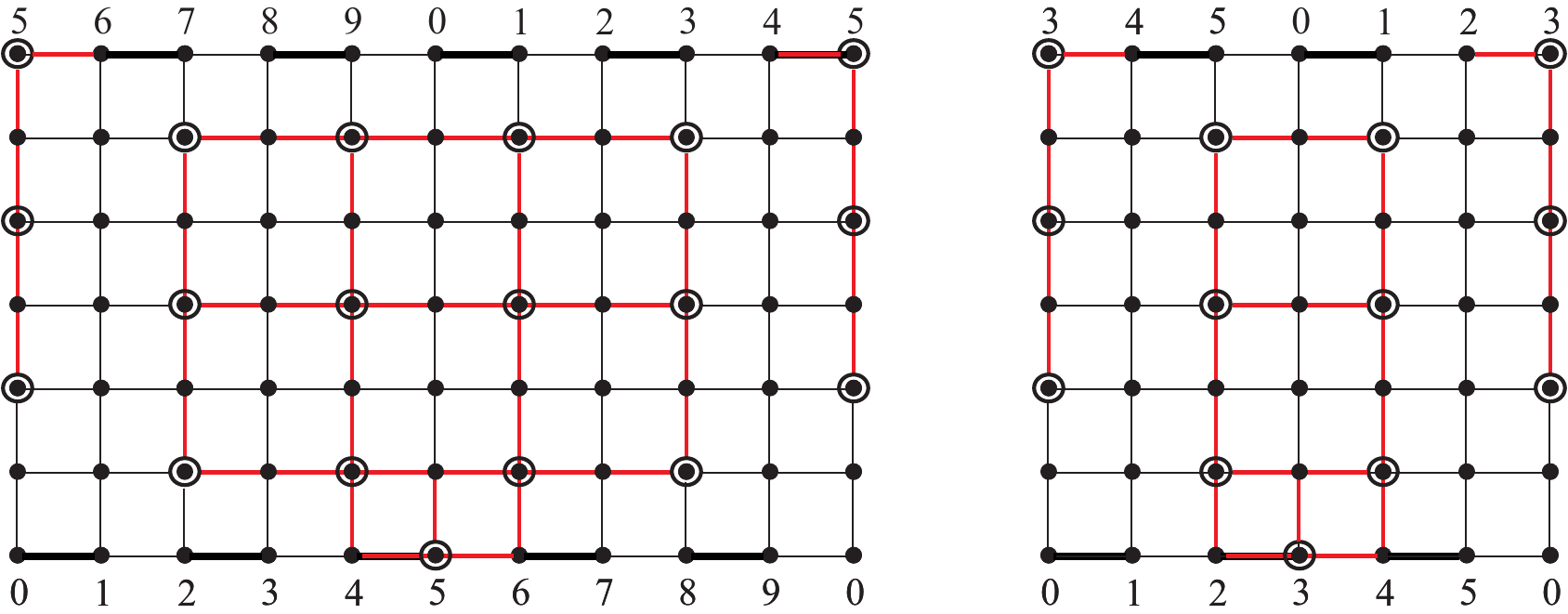}
\caption{\label{mmark2}Marked vertices and ``marked subgraph'' of $T(6,10,5)$ and $T(6,6,3)$.}
\end{figure}

Since $v_{2n-1,2m-2r+1}$, $v_{2n-2,2m-2r}$ and $v_{2n-2,2m-2r+2}$ are marked vertices, four paths of length two $v_{2n-2,2m-2r}v_{2n-1,2m-2r}v_{2n-1,2m-2r+1}$, $v_{2n-2,2m-2r}v_{2n-2,2m-2r+1}v_{2n-1,2m-2r+1}$, $v_{2n-2,2m-2r+1}\\v_{2n-2,2m-2r+2}v_{2n-1,2m-2r+2}$ and $v_{2n-2,2m-2r+1}v_{2n-1,2m-2r+1}v_{2n-1,2m-2r+2}$ are contained in ``marked subgraph''. Let $C$ be an $M$-alternating cycle of ``marked subgraph''. Then $C$ contains the vertex $v_{2n-1,2m-2r+1}$. Since $C$ is $M$-alternating, it also contains three edges $v_{2n-1,2m-2r}v_{2n-2,2m-2r}$, $v_{2n-1,2m-2r}v_{2n-1,2m-2r+1}$ and $v_{2n-1,2m-2r+1}v_{2n-2,2m-2r+1}$, and such four vertices $v_{2n-1,2m-2r}$,\\ $v_{2n-1,2m-2r+1}$, $v_{2n-2,2m-2r}$ and $v_{2n-2,2m-2r+1}$ are on the boundary of $\text{Int}[C]$. Next, we prove that $C$ contains exactly such four vertices. If $C$ contains at least six vertices, then $\text{Int}[C]$ and $\text{Int}[C]-\{v_{2n-1,2m-2r}, v_{2n-1,2m-2r+1}\}$ have the same number of interior vertices. Since $\text{Int}[C]-\{v_{2n-1,2m-2r}, v_{2n-1,2m-2r+1}\}$ is a $2\times 2$-polyomino, it has an odd number of interior vertices by Lemma \ref{polyomino}. Thus, $\text{Int}[C]$ has an odd number of interior vertices, which contradicts that $C$ is $M$-alternating.
Thus $$C=v_{2n-1,2m-2r}v_{2n-1,2m-2r+1}v_{2n-2,2m-2r+1} v_{2n-2,2m-2r}v_{2n-1,2m-2r}.$$

If $v_{2n-2,2m-2r}v_{2n-2,2m-2r+1}\notin M$, then $C$ is not $M$-alternating. Hence none of cycles in ``marked subgraph'' is $M$-alternating. So we assume that $v_{2n-2,2m-2r}v_{2n-2,2m-2r+1}\in M$. Translating marked vertices right by two columns, by a similar argument, we suffice to consider the case that $v_{2n-2,2m-2r+2}v_{2n-2,2m-2r+3}\in M$. Proceeding like this, it suffices to consider the case that $M$ has the same matching form on the last $2n$ rows, i.e., $\{v_{i,2j}v_{i,2j+1}|j\in Z_m\}\subseteq M$ for $0\leq i\leq 2n-1$. Since the torsion is $2r-1$, $M$ has different matching form on the first two rows. By the previous argument, we have done.
\end{proof}

\section{\normalsize  Discussion of the maximum forcing number of $T(2n+1,2m,r)$ for $1\leq r\leq 2m$}

By Theorems \ref{odd} and \ref{even}, we obtain the maximum forcing number of $T(2n+1,2m,2r)$ for $1\leq r\leq m$.
\begin{thm}\label{mqps0} For $n\geq 1$, $m\geq 2$ and $1\leq r\leq m$, we have
\begin{equation*}
 F(T(2n+1,2m,2r))=
 \begin{cases}
 \frac{m(2n+1)+(r,m)}{2}, & \quad {if\  \frac{m}{(r,m)}\  is\  odd};\\
 \frac{m(2n+1)}{2},&\quad {otherwise}.
 \end{cases}
 \end{equation*}
\end{thm}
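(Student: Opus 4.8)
The plan is to reduce the statement to Theorems~\ref{odd} and~\ref{even} by passing to the dual representation $T^*$ of Lemma~\ref{drawing}. Since $T(2n+1,2m,2r)$ and $T^*(2n+1,2m,2r)$ are the same graph (only the planar drawing changes, horizontal and vertical edges being preserved), they have the same maximum forcing number. Writing $d=(r,m)$, $m_1=\frac{m}{d}$ and $r_1=\frac{r}{d}$, an application of Lemma~\ref{drawing} with parameters $(2n+1,2m,2r)$ -- for which $(2r,2m)=2d$ -- gives
\[
T^*(2n+1,2m,2r)=T\bigl(2d,\ m_1(2n+1),\ (m_1-k)(2n+1)\bigr),
\]
where $0\le k\le m_1-1$ is the unique integer with $rk\equiv d\pmod m$, equivalently $r_1k\equiv 1\pmod{m_1}$. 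The first parameter $2d$ is even, so which of the earlier theorems applies is governed by the parity of the middle parameter $m_1(2n+1)$, i.e.\ (since $2n+1$ is odd) by the parity of $m_1=\tfrac{m}{(r,m)}$.

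If $m_1$ is odd, then $m_1(2n+1)$ is odd and $T^*(2n+1,2m,2r)$ has the form $T(2n',2m'+1,t)$ with $n'=d$, $m'=\tfrac12\bigl(m_1(2n+1)-1\bigr)$ and $t=(m_1-k)(2n+1)$; one checks $m'\ge 1$ and $1\le t\le 2m'+1$ using $2n+1\ge 3$. Theorem~\ref{odd} then gives
\[
F=(m'+1)n'=\tfrac12\bigl(m_1(2n+1)+1\bigr)d=\tfrac12\bigl(m(2n+1)+(r,m)\bigr),
\]
which is the first case of the statement.

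If $m_1$ is even, the key point is that the torsion $(m_1-k)(2n+1)$ is forced to be \emph{odd}: indeed $r_1$ is coprime to the even number $m_1$, hence odd, and from $r_1k\equiv 1\pmod{m_1}$ with $m_1$ even it follows that $r_1k$ is odd, so $k$ is odd and $m_1-k$ is odd. Thus $T^*(2n+1,2m,2r)$ has the form $T(2n',2m',2r'-1)$ with $n'=d$, $m'=\tfrac{m_1}{2}(2n+1)$ and $2r'-1=(m_1-k)(2n+1)$; here $m'\ge 2$ and $1\le 2r'-1\le 2m'$. Theorem~\ref{even} then gives
\[
F=m'n'=\tfrac{m_1}{2}(2n+1)\,d=\tfrac12\,m(2n+1),
\]
which is the second case, since $\tfrac{m}{(r,m)}=m_1$ is even. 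This is also why only Theorems~\ref{odd} and~\ref{even} (and not Theorem~\ref{mqps1}) are needed: the even-torsion alternative never occurs.

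Most of the write-up is routine bookkeeping: checking that the hypotheses of Theorems~\ref{odd} and~\ref{even} ($n,m\ge1$, respectively $n\ge1,\ m\ge2$) and the admissible ranges of the torsions survive the reduction, all of which follow from $n\ge1$ and $m\ge2$. The one step I would treat with care -- and the genuine content of the argument -- is the parity observation that $k$ is odd whenever $m_1$ is even, since it is exactly this that fixes the parity of the torsion in $T^*(2n+1,2m,2r)$ and sends each case to the correct earlier theorem.
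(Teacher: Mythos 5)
Your proposal is correct and follows essentially the same route as the paper: both pass to $T^*(2n+1,2m,2r)$ via Lemma~\ref{drawing}, split on the parity of $\frac{m}{(r,m)}$, and in the even case establish that $k$ is odd (so the resulting torsion is odd) before invoking Theorem~\ref{odd} or Theorem~\ref{even} accordingly. The parity argument you single out as the key step is exactly the one the paper makes, phrased via $\frac{r}{(r,m)}k=\frac{m}{(r,m)}p+1$ instead of $r_1k\equiv 1\pmod{m_1}$.
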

\begin{proof}By Lemma \ref{drawing}, $T(2n+1,2m,2r)$ has another representation $$T^*(2n+1,2m,2r)=T(2(r,m),\frac{m(2n+1)}{(r,m)},(2n+1)(\frac{m}{(r,m)}-k))$$ where $0\leq k\leq \frac{m}{(r,m)}-1$ satisfies the equation $(2r,2m)\equiv 2rk$ (mod $2m$).

If $\frac{m}{(r,m)}$ is even, then $2rk-(2r,2m)= 2mp$ for some non-negative integer $p$. That is, $rk-(r,m)= mp$. Thus $\frac{r}{(r,m)}k= \frac{m}{(r,m)}p+1$. Since $\frac{m}{(r,m)}$ is even and $\frac{m}{(r,m)}p+1$ is odd, we obtain that $k$ is an odd number. Hence $\frac{m}{(r,m)}-k$ and $(2n+1)(\frac{m}{(r,m)}-k)$ are also odd numbers.
Let $n'=(r,m)$, $m'=\frac{m(2n+1)}{2(r,m)}$ and $2r'-1=(2n+1)(\frac{m}{(r,m)}-k)$. Then $T^*(2n+1,2m,2r)=T(2n',2m',2r'-1).$
Since $0\leq k\leq \frac{m}{(r,m)}-1$, we have $2n+1\leq 2r'-1 \leq (2n+1)\frac{m}{(r,m)}=2m'$. Thus  $n+1\leq r'<m'$. By Theorem \ref{even}, we have $$F(T(2n+1,2m,2r))=F(T(2n',2m',2r'-1))=m'n'=\frac{m(2n+1)}{2}.$$

If $\frac{m}{(r,m)}$ is odd, then $2(r,m)$ is even, $\frac{m(2n+1)}{(r,m)}$ is odd.
Let $n'=(r,m)$, $2m'+1=\frac{m(2n+1)}{(r,m)}$ and $r'=(2n+1)(\frac{m}{(r,m)}-k)$. Since $0\leq k\leq \frac{m}{(r,m)}-1$, we have $2n+1\leq r'\leq (2n+1)\frac{m}{(r,m)}=2m'+1$. By Theorem \ref{odd}, we have $$F(T(2n+1,2m,2r))=F(T(2n',2m'+1,r'))=(m'+1)n'=\frac{m(2n+1)+(r,m)}{2}.$$
Now we finish the proof.
\end{proof}

For $T(2n+1,2m,2r-1)$, we have not been able to obtain a general expression for the maximum forcing number for $1\leq r\leq m$.
Therefore, computing the maximum forcing number of $T(2n+1, 2m, 2r-1)$ is an open problem.

\vspace{2mm}
\noindent{\normalsize \textbf{Acknowledgments }}  This work is supported by NSFC\,(Grant No. 12271229), start-up funds of Inner Mongolia Autonomous Region (Grant No. DC2400002165) and Inner Mongolia University of Technology (Grant No. BS2024038).
\vspace{3mm}

\noindent{\normalsize \textbf{Conflict of interest }}  The authors declare no conflict of interest.



\end{document}